\newtheorem{abc}{Theorem}[section]
\newtheorem{abcd}{Corollary}[section]
\newtheorem{theorem}{Theorem}[section]
\newtheorem{lemma}[theorem]{Lemma}
\newtheorem{definition}[theorem]{Definition}
\newtheorem{example}[theorem]{Example}
\newtheorem{proposition}[theorem]{Proposition}
\newtheorem{corollary}[theorem]{Corollary}
\newtheorem*{theorem*}{Theorem}
\theoremstyle{remark}
\newcommand{\Z}{\mathbb{Z}}
\newcommand{\Prob}{\operatorname{Prob}}
\newcommand{\supp}{\operatorname{supp}}
\newcommand{\half}{\frac{1}{2}}
\renewcommand{\email}[2][]{%
  \ifx\emails\@empty\relax\else{\g@addto@macro\emails{,\space}}\fi%
  \@ifnotempty{#1}{\g@addto@macro\emails{\textrm{(#1)}\space}}%
  \g@addto@macro\emails{#2}%
}
\title{Balanced Measures on Compact Median Algebras}
\author{Uri Bader}
\author{Aviv Taller}
\address{Email addresses:}
\address{A. Taller:
avivtaller@gmail.com}
\address{U. Bader: uribader@gmail.com}
\address{Data availability statement : NA}
\begin{document}
\maketitle

\begin{abstract} 
We initiate a systematic investigation of group actions on compact median algebras
via the corresponding dynamics on their spaces of measures.
We show that a probability measure which is invariant under a natural push forward operation must be a uniform measure on a cube
and use this to show that every amenable group action on a locally convex compact median algebra fixes a sub-cube.
\end{abstract}

\section{Introduction}
{\em Median algebras} form a common generalization of dendrites and distributive lattices.
While early investigations of these objects mainly dealt with combinatorial aspects as part of order theory,
recently they obtained much attention due to the observation that CAT(0) {\em cube complexes} carry a natural {\em median space} structure,
that is, compatible metric and median algebra structures.
A powerful tool in the investigation of a CAT(0) cube complex is provided by embedding it in a compact median algebra, namely its {\em Roller compactification}, see \cite{rol98} and also \cite{fio20} for a further discussion.
This brings to front the class of {\em compact median algebras}.

As common for dynamical systems, we study compact median algebras by investigating their invariant measures.
Let $M$ be a second countable compact median algebra, endowed with a continuous {\em median operator} $m:M^3\to M$.
We study the operator $\Phi$, called {\em self-median operator}, defined on the space of Borel probability measures of $M$
by the formula
\[ \Phi:\Prob(M) \to \Prob(M), \quad \Phi(\mu)=m_* (\mu^3), \]
along with its space of invariant measures, $\Prob(M)^\Phi$,
which elements we denote {\em balanced measures}.

A basic example of a second countable compact median algebra is the cube $\{0,1\}^I$, endowed with the product topology and median structure,
where $I$ is a countable index set\footnote{In this paper, a countable set could be finite, in particular empty.}.
One verifies easily that the uniform measure $\lambda \in \Prob(\{0,1\}^I)$ is balanced (see the beginning of \S\ref{sec:uniform} for details).
Clearly, if $f:N\to M$ is a continuous morphism of second countable compact median algebras and $\mu$ is a balanced measure on $N$ then $f_*(\mu)$ is a balanced measure on $M$.
In particular, for every continuous median algebra morphism $f:\{0,1\}^I \to M$,
the push forward of the uniform measure, $f_*(\lambda)$, is a balanced measure on $M$.
In case $f$ is injective, we say that $f(\{0,1\}^I)$ is a {\em cube} in $M$ and $f_*(\lambda)$ is a {\em cubical measure} on $M$.

Our main theorem is the following classification result
which applies to the class of {\em second countable, locally open-convex, compact}, or for short {\em sclocc}, median 
algebras (see Definitions~\ref{def:locc} and \ref{def:sclocc}).

\begin{abc} \label{thm:main}
Every balanced measure on a sclocc median algebra is cubical.
%
\end{abc}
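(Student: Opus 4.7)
The plan is to probe $\mu$ via continuous median morphisms into finite cubes, where the balance condition becomes a finite combinatorial problem, and then to assemble the local information using the sclocc hypothesis.

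\emph{Step 1: Probing by clopen half-spaces.} For each clopen half-space $H\subset M$, the indicator $\chi_H:M\to\{0,1\}$ is a continuous median morphism, so $\chi_{H,*}(\mu)$ is a balanced measure on $\{0,1\}$. A direct computation (solving $p^2(3-2p)=p$) shows that the only balanced measures on $\{0,1\}$ are $\delta_0$, $\delta_1$, and the uniform measure $\lambda$; hence $\mu(H)\in\{0,\tfrac12,1\}$ for every clopen half-space $H$. More generally, any finite collection $H_1,\ldots,H_n$ yields a continuous median morphism $M\to\{0,1\}^n$ under which $\mu$ pushes forward to a balanced measure on the finite cube.

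\emph{Step 2: Finite-cube classification.} The central lemma I would aim for is that every balanced probability measure on $\{0,1\}^n$ is cubical, namely the uniform measure on some (not necessarily axis-aligned) sub-cube. I expect this to be the \textbf{main technical obstacle}. My approach is induction on $n$: project onto the last coordinate and invoke Step 1. The two Dirac cases confine $\mu$ to a face and the inductive hypothesis applies directly. The uniform case is more delicate: the two conditional measures $\mu_0,\mu_1$ on the fibres $\{x_n=0\}$ and $\{x_n=1\}$ satisfy a coupled system of balance-type equations that, after disentangling, should force each to be uniform on some sub-cube of $\{0,1\}^{n-1}$, with the two fibre sub-cubes related either by the identity (producing a ``straight'' coordinate) or by a median involution (producing a ``tilted'' coordinate).

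\emph{Step 3: Assembly in the sclocc setting.} Second countability together with local open-convexity provides a countable family of clopen half-spaces jointly separating the points of $M$, yielding an injective continuous median embedding $\iota:M\hookrightarrow\{0,1\}^{\N}$. Applying Step 2 to every finite coordinate projection of $\iota_*(\mu)$ gives a coherent inverse system of sub-cubes whose projective limit is a sub-cube $C\subset\{0,1\}^{\N}$ on which $\iota_*(\mu)$ is uniform; a mild compatibility check is needed to ensure the coordinates of the finite sub-cubes assemble into a genuine cube structure on $C$.

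\emph{Step 4: Descent to $M$.} This last step is then essentially automatic: since $C=\supp(\iota_*(\mu))\subset\iota(M)$ and $C$ is a sub-cube of the ambient product, the identification of $C$ with some $\{0,1\}^J$ factors through $\iota$, producing an injective continuous median morphism $f:\{0,1\}^J\to M$ with image $C$ and $f_*(\lambda)=\mu$, witnessing that $\mu$ is cubical.
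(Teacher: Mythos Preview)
Your Step~3 contains a genuine gap. The claim that second countability plus local open-convexity yields a countable family of clopen half-spaces separating the points of $M$ is false: take $M=[0,1]$ with its usual median and topology. This is sclocc, yet it has no nontrivial clopen half-space at all, so your map $\iota$ collapses $M$ to a point and Steps~3--4 learn nothing. What the sclocc hypothesis actually gives (Proposition~\ref{prop:seperation2}) is separation by \emph{admissible} half-spaces, which are open but typically not closed. The paper's decisive move, which your outline does not contain, is to first restrict to $\supp(\mu)$ and then use the balanced, fully supported measure to force every admissible half-space there to be clopen and any two such walls to be transverse (Lemma~\ref{lem:balancedmeasures}): if $\mathfrak{h}$ is admissible one finds another admissible $\mathfrak{h}'\supset\mathfrak{h}^*$, both have measure $\tfrac12$, so $\mathfrak{h}\cap\mathfrak{h}'$ is open of measure $0$, hence empty by full support, whence $\mathfrak{h}^*=\mathfrak{h}'$ is open. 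This is how the support is recognized as a cube without ever embedding $M$ itself into a product.

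Your Step~2 is also underspecified at exactly the point you flag. When the last-coordinate projection is uniform, the two conditional measures $\mu_0,\mu_1$ on the fibres are \emph{not} balanced, and ``disentangling'' the coupled system they satisfy is not obviously easier than the original problem; your remark about a ``median involution'' relating the fibre sub-cubes does not match how non-axis-aligned sub-cubes actually arise. The paper's induction is organized differently and avoids this: one projects onto $\{0,1\}^{I\setminus\{i\}}$ for \emph{each} $i$, so the inductive hypothesis gives $\mu(x)+\mu(x+e_i)=2^{1-n}$ for all $i$, forcing $\mu$ to be invariant under the even-parity subgroup $K_0$; this reduces the problem to a one-parameter family on which $\Phi$ is computed explicitly as a cubic (Lemma~\ref{lem:phidynamics}) with $t=\tfrac12$ the unique fixed point in $(0,1)$.
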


Theorem~\ref{thm:main} is a direct corollary of the following two propositions.

\begin{proposition} \label{prop:support}
The support of every balanced measure on a sclocc median algebra is a cube.
\end{proposition}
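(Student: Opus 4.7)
The plan is to first establish that $S := \supp(\mu)$ is a closed median subalgebra, and then to extract a cube structure on $S$ by analyzing the measures of halfspaces. The first step is immediate: since $\Phi(\mu) = \mu$, $m$ is continuous, and $M$ is compact, we have $S = \supp(m_*\mu^3) = \overline{m(S^3)} = m(S^3)$, so $S$ is closed under the median.

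Next I would quantize halfspace measures. A halfspace is a convex set $H$ with convex complement, characterized by the property that $m(x,y,z) \in H$ if and only if at least two of $x,y,z$ lie in $H$. Combined with $\mu = m_*\mu^3$, this yields the identity $p = 3p^2 - 2p^3$ for $p := \mu(H)$, forcing $p \in \{0, 1/2, 1\}$. A parallel direct computation applied to the convex set $H_1 \cap H_2$, with both $H_i$ of $\mu$-measure $1/2$, reduces to a quadratic in $q := \mu(H_1 \cap H_2)$ whose roots are $\{0, 1/4, 1/2\}$; the three cases correspond, modulo $\mu$, to $H_1 \equiv H_2^c$, to $\mu$-independence of $H_1$ and $H_2$, and to $H_1 \equiv H_2$.

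Armed with these two facts, I would select (via Zorn) a maximal family $\mathcal{H}$ of halfspaces of $\mu$-measure $1/2$ that are pairwise $\mu$-independent and pairwise non-complementary; second countability of $M$ makes $\mathcal{H}$ at most countable. The map
\[ \chi : S \to \{0,1\}^{\mathcal{H}}, \qquad \chi(x)_H = 1_H(x), \]
is then a continuous median morphism, and by the pairwise independence $\chi_*\mu$ is the uniform measure. In particular $\chi(S) \supseteq \supp(\chi_*\mu) = \{0,1\}^{\mathcal{H}}$, giving surjectivity of $\chi$ onto the cube.

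The main obstacle is injectivity of $\chi$. Given distinct $x,y \in S$, local open-convexity of $M$ lets me choose open convex neighborhoods $U \ni x$ and $V \ni y$ with $y \notin U$ and $x \notin V$. A Hahn--Banach-type separation theorem for convex sets in sclocc median algebras---which the topological hypotheses should provide, and which is presumably established earlier in the paper---then yields a halfspace $H$ with $U \subseteq H$ and $V \subseteq H^c$. Since $x \in \supp(\mu) \cap U$ and $y \in \supp(\mu) \cap V$, both $\mu(H)$ and $\mu(H^c)$ are strictly positive, forcing $\mu(H) = 1/2$ by the quantization. Maximality of $\mathcal{H}$ then gives some $H' \in \mathcal{H}$ with $H \equiv H'$ or $H \equiv (H')^c$ modulo $\mu$. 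The final delicate step is to promote this $\mu$-a.e.\ equivalence to genuine pointwise agreement on $S$, so that $H'$ really does separate $x$ from $y$; this is where the combined strength of local open-convexity and the closedness of $S$ will have to be marshaled.
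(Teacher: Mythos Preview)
Your outline has the right overall shape but contains two genuine gaps, both of which the paper's proof avoids by working with \emph{topological} rather than \emph{measure-theoretic} notions.

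First, the surjectivity step. You assert that pairwise $\mu$-independence of the family $\mathcal{H}$ forces $\chi_*\mu$ to be the uniform measure on $\{0,1\}^{\mathcal{H}}$. But pairwise independence of Bernoulli$(1/2)$ variables does not imply joint independence; you would need to prove that \emph{balancedness} rules out the non-uniform possibilities, and that amounts to (a mild strengthening of) Proposition~\ref{prop:uniform}, proved separately by a delicate induction. Second, and more seriously, the injectivity step. Your family $\mathcal{H}$ is produced by Zorn under purely measure-theoretic constraints, so an $H'\in\mathcal{H}$ need not be open. Even if you restrict $\mathcal{H}$ to admissible half-spaces, the symmetric difference $H\triangle H'$ is not obviously open, so $\mu(H\triangle H')=0$ and $\supp(\mu)=S$ do not yield $H\cap S=H'\cap S$. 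The missing ingredient is precisely that on the support $S$ every admissible half-space is automatically \emph{clopen}; this is the content of the paper's Lemma~\ref{lem:balancedmeasures}, and it is not a formality---it uses the separation theorem (Proposition~\ref{prop:seperation2}) together with the measure quantization in a short but essential argument.

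The paper sidesteps both issues at once. After restricting to $S$, it shows (Lemma~\ref{lem:balancedmeasures}) that every admissible half-space is clopen and that any two distinct clopen walls are \emph{geometrically} transverse (all four quadrants nonempty). Separation of points then follows directly from Proposition~\ref{prop:seperation2}, and surjectivity of $\iota_W$ onto $\{0,1\}^{\mathscr{W}^\circ}$ follows from pairwise transversality via Helly's theorem and compactness (Lemma~\ref{lem:cube})---no appeal to independence of random variables is needed. The upshot is that your $\mu$-a.e.\ reasoning should be replaced by the clopen/transversality argument, which is what makes the ``promotion'' step you flagged actually go through.
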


\begin{proposition} \label{prop:uniform}
The uniform measure is the unique fully supported balanced measure on a cube.
\end{proposition}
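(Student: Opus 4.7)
The plan is to reduce to the finite-dimensional case by pushing forward along coordinate projections, and then, on $\{0,1\}^n$, to combine induction on $n$ with a Walsh--Fourier calculation of the balanced equation.

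I would identify the cube with $\{0,1\}^I$ for a countable set $I$ and fix a fully supported balanced measure $\mu$. For every finite $J\subseteq I$ the projection $\pi_J\colon\{0,1\}^I\to\{0,1\}^J$ is a continuous median morphism, so $(\pi_J)_*\mu$ is a balanced measure on $\{0,1\}^J$; it is again fully supported because every cylinder $\pi_J^{-1}(\epsilon)$ is a nonempty open subset of $\{0,1\}^I$. A Borel probability measure on $\{0,1\}^I$ is determined by its finite-dimensional marginals, so it suffices to treat the case $I=[n]$.

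The finite case proceeds by induction on $n$, the base $n=0$ being trivial. For the step, fix $n\geq 1$; the restriction of $\mu$ to any $n-1$ coordinates is fully supported and balanced, hence uniform by the induction hypothesis. Writing $\chi_S(x):=(-1)^{\sum_{i\in S}x_i}$ and $\hat\mu(S):=\int\chi_S\,d\mu$, this says that $\hat\mu(S)=0$ for every nonempty proper $S\subsetneq[n]$, so
\[ \mu(\{x\})=2^{-n}\bigl(1+c\,\chi_{[n]}(x)\bigr),\qquad c:=\hat\mu([n]), \]
with $|c|<1$ by full support.

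To pin down $c$, I would compute the one remaining Fourier coefficient of $\Phi(\mu)$. Using the $\{\pm 1\}$ identity $\mathrm{maj}(a,b,c)=\tfrac{1}{2}(a+b+c-abc)$,
\[ \chi_{[n]}\bigl(m(X,Y,Z)\bigr)=2^{-n}\prod_{i=1}^{n}(X'_i+Y'_i+Z'_i-X'_iY'_iZ'_i), \]
where $X',Y',Z'$ are the $\{\pm 1\}$-images of independent samples $X,Y,Z\sim\mu$. Expanding the product and using independence, each of the $4^n$ summands is indexed by a partition $[n]=A\sqcup B\sqcup C\sqcup D$ and has expectation $(-1)^{|D|}\hat\mu(A\cup D)\hat\mu(B\cup D)\hat\mu(C\cup D)$. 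The vanishing of the intermediate Fourier coefficients forces each of $A\cup D,\,B\cup D,\,C\cup D$ to be $\emptyset$ or $[n]$; a short case analysis leaves exactly three partitions contributing $c$ each (one of $A,B,C$ equal to $[n]$, the others and $D$ empty) and one partition contributing $(-1)^n c^3$ (namely $D=[n]$, $A=B=C=\emptyset$). The balanced equation $\widehat{\Phi(\mu)}([n])=c$ thereby reduces to $(2^n-3)c=(-1)^n c^3$, whose solutions are $c=0$ or $c^2=(-1)^n(2^n-3)$. For every $n\geq 1$ the latter value is either negative or strictly greater than $1$, which is incompatible with $|c|<1$; hence $c=0$ and $\mu$ is uniform.

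I expect the main obstacle to be the combinatorial bookkeeping in the last step: verifying that only the four partitions identified above survive the Fourier vanishing, and that the signs $(-1)^{|D|}$ combine as claimed. Once the scalar equation is in place, full support forces $c=0$ immediately, and both the dimension induction and the reduction to finite $I$ via consistency of finite-dimensional marginals are routine.
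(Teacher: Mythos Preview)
Your argument is correct and shares its overall architecture with the paper's proof: both reduce to finite cubes via coordinate projections, both induct on $n$, and both use the inductive hypothesis on all $(n{-}1)$-dimensional marginals to pin $\mu$ down to a one-parameter family. You phrase this last step as the vanishing of $\hat\mu(S)$ for all proper nonempty $S$; the paper phrases it as invariance of $\mu$ under the index-two subgroup $K_0=\ker\bigl(\sum_i x_i\bigr)$. These are the same condition (the annihilator of $K_0$ in the dual group is $\{\emptyset,[n]\}$), and under the change of variable $c=2t-1$ your parameter $c$ matches the paper's parameter $t$. Where the two proofs genuinely diverge is in extracting the scalar equation. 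The paper computes $\Phi(\mu_t)(\{0\})$ by a recursive count of $|m^{-1}(\{0\})\cap A_i|$ through a $4\times4$ transfer matrix in $n$, obtaining an explicit cubic $\phi(t)$ and then solving $\phi(t)=t$. You instead exploit the $\{\pm1\}$ identity $\mathrm{maj}(a,b,c)=\tfrac12(a+b+c-abc)$ and a Walsh--Fourier expansion to read off $\widehat{\Phi(\mu)}([n])$ directly, arriving at $(2^n-3)c=(-1)^n c^3$; this is exactly the paper's fixed-point equation after the substitution $c=2t-1$. Your route bypasses the recursive combinatorics entirely and is shorter; the paper's route has the advantage of giving the full dynamics of $\Phi$ on the one-parameter family, not just its fixed points.

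One small slip to fix: for $n\in\{1,2\}$ the value $(-1)^n(2^n-3)$ equals $1$, so it is neither negative nor strictly greater than $1$. The conclusion is unaffected, since $c^2=1$ is already incompatible with $|c|<1$; just replace ``strictly greater than $1$'' by ``at least $1$''.
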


Proposition~\ref{prop:support} has the following interesting corollary. 

\setcounter{abcd}{1}
\begin{abcd}
Assume that $G$ is a locally compact amenable group which acts on a sclocc median algebra $M$
by continuous automorphisms. 
Then $M$ contains a $G$-invariant cube.
\end{abcd}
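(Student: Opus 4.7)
The plan is to produce a $G$-invariant balanced measure on $M$ and then apply Theorem~\ref{thm:main}. Amenability of $G$ ensures that $\Prob(M)^G$ is non-empty, since the pushforward action of $G$ on the compact convex set $\Prob(M)$ is continuous and affine. Because $G$ acts by median morphisms, the self-median operator $\Phi$ commutes with the pushforward action and therefore preserves the subset $\Prob(M)^G$.

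The main obstacle will be producing a simultaneous fixed point for $G$ and for $\Phi$. The operator $\Phi$ is continuous but not affine, so the Markov-Kakutani theorem does not apply directly to it. However, $\Prob(M)^G$ is a non-empty compact convex subset of the locally convex space of signed Borel measures on $M$ (with the weak-$*$ topology), and the restriction of $\Phi$ to it is a continuous self-map. Thus Tychonoff's fixed point theorem produces some $\mu \in \Prob(M)^G$ with $\Phi(\mu)=\mu$, i.e., a $G$-invariant balanced measure on $M$.

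Applying Theorem~\ref{thm:main} to $\mu$, one obtains an injective continuous median morphism $f : \{0,1\}^I \to M$ with $\mu = f_*(\lambda)$. Set $C := f(\{0,1\}^I)$, the associated cube in $M$. Since $\{0,1\}^I$ is compact and $\lambda$ has full support there, $C$ is closed in $M$ and $\supp(\mu)=C$. The final step is routine: for every $g\in G$, the $G$-invariance of $\mu$ gives
\[ g\cdot C \;=\; g\cdot \supp(\mu) \;=\; \supp(g_*\mu) \;=\; \supp(\mu) \;=\; C, \]
so $C$ is a $G$-invariant cube, as required.
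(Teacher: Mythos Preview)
Your proof is correct and follows essentially the same route as the paper: amenability gives $\Prob(M)^G\neq\varnothing$, the self-median operator $\Phi$ preserves this compact convex set, Tychonoff's fixed point theorem yields a $G$-invariant balanced measure, and its support is then a $G$-invariant cube. The only cosmetic difference is that the paper invokes Proposition~\ref{prop:support} directly to identify the support as a cube, whereas you pass through Theorem~\ref{thm:main} and then observe that the support of a cubical measure is the image cube.
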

 
Indeed, by the amenability of $G$, $\Prob(M)^G$ is non-empty
and clearly $\Phi$-invariant, thus it contains a balanced measure $\mu$ by the Tychonoff Fixed Point Theorem
and the support of $\mu$ is a $G$-invariant cube by Proposition~\ref{prop:support}.

Note that one cannot expect a $G$-invariant cube to be pointwise fixed.
Indeed, the group $(\Z/2\Z)^I$ acts transitively
on the cube $\{0,1\}^I$ by median automorphisms
(see the discussion in the begining of \S\ref{sec:uniform}).

The requirements that a topological median algebra is sclocc
is satisfied by the Roller compactification of a (possibly infinite dimensional) CAT(0) cube complex with countably many vertices, as well as a second countable median space of finite rank, see Example~\ref{ex:sclocc}. For further discussions on CAT(0) cube complexes and median spaces, see \cite{sag95, cfi16, bow13, Bowditch2020, cdh10, fio20} and the references therein.

We note that if $M$ is not assumed second countable then the median operator $m$ might not be Borel (with respect to the product of the Borel $\sigma$-algebras on $M^3$) even though it is continuous, thus we cannot define the self median operator $\Phi$ in this generality.
Consider the interval $[0,\omega_1]$,
where $\omega_1$ is the first uncountable ordinal,
and endow it with the order topology and median structure.
Even though this space is not second countable, one verifies easily that $m$ is Borel, thus $\Phi$ is defined, in this case. 
Then the Dieudonn\'e measure,
which assigns 1 to Borel sets containing an unbounded closed subset and 0 to their complements, is a balanced measure which has an empty support.
Hereafter, when discussing measures on compact spaces we will assume that the underlying space is second countable, thus every measure has a non-empty support.

The structure of this note is as follows.
After reviewing median algebras in the next section, we will focus our attention on the theory of compact median algebras in \S\ref{sec:compact}.
We will then prove Proposition~\ref{prop:support} in \S\ref{sec:support}
and Proposition~\ref{prop:uniform} in \S\ref{sec:uniform}.

\medskip
\noindent \textbf{Acknowledgments}. 
The authors thank Elia Fioravanti for sharing with them the observation that a finite rank median space is locally open-convex and referring them to \cite{Bowditch2020}.
The authors wish to thank the anonymous referee for many valuable comments.
This research is supported by ISF Moked 713510 grant number 2919/19.

\section{A review of Median Algebras} \label{sec:review}

A \textit{median algebra} is a set $M$, equipped with a ternary operation\\ $m:M\times M \times M \rightarrow M$, called the \textit{median operator}, that has the following properties: \\$\forall x,y,z,u,v\in M$
\begin{enumerate}[leftmargin=3cm]
    \item[(\textbf{Med 1})] $m(x,y,z)=m(x,z,y)=m(y,x,z)$       
    \item[(\textbf{Med 2})] $m(x,x,y)=x$     
    \item[(\textbf{Med 3})] $m(m(x,y,z),u,v)=m(x,m(y,u,v),m(z,u,v))$ \quad  
\end{enumerate}
A {\em median morphism} $\varphi:M\rightarrow N$ between two median algebras, is a map such that $\varphi\circ m = m\circ( \varphi \times \varphi \times \varphi)$ (notice that we may confuse often between the median operators of one object to another, like we did here). 

A basic example of a median algebra is $\{0,1\}$, endowed with the standard median operation.
The category of median algebra has arbitrary products, given by the Cartesian products of the underlying sets
and the coordinate-wise operations. 
In particular, for any index set $I$, $\{0,1\}^I$ is a median algebra.
A median algebra that isomorphic to $\{0,1\}^I$ for some $I$ is called a {\em cube}.
A cube in a median algebra $M$ means a median subalgebra of $M$ that is a cube.

For the rest of this section we fix a median algebra $M$.
The \textit{interval} of two elements  $x,y\in M$ is the subset $[x,y]=\{u\in M\mid m(x,y,u)=u\}$. 

\begin{lemma}[statements (Int 3) and (Int 6) at the beginning of section 2 in \cite{rol98}] \label{lem:int6}
For every $x,y,z\in M$,

\begin{enumerate}
\item $y\in [x,z] \quad \implies \quad [x,y]\subset [x,z]. $
\item $ y\in [x,z] \quad \iff \quad [x,y]\cap[y,z]=\{y\}. $
\end{enumerate}
\end{lemma}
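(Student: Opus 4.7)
The plan is to reduce both parts to two short manipulations that substitute one of the given median identities into another and then apply the distributivity axiom Med 3 to simplify.

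For part (1), my plan is as follows. The hypothesis $y\in[x,z]$ translates to $m(x,y,z)=y$ and the assumption $u\in[x,y]$ translates to $m(x,y,u)=u$; the goal is $m(x,z,u)=u$. I will substitute the first identity into the second, writing $u=m(x,y,u)=m(x,m(x,y,z),u)$, then apply Med 3 (after reordering with Med 1) to the outer median. The assumption $m(x,y,u)=u$ will collapse one of the two inner medians produced by Med 3, yielding an identity of the form $u=m(x,u,w)$ where $w:=m(x,z,u)$. To finish, I will invoke the basic fact that $m(a,b,c)$ lies in each of the three intervals $[a,b]$, $[b,c]$, $[a,c]$ (this is among the early statements of \cite{rol98} preceding (Int 3), so it may be used freely here); applied to our $w$ this gives $w=m(x,u,w)$. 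Comparing with $u=m(x,u,w)$ forces $w=u$, i.e.\ $u\in[x,z]$.

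For part (2), I will treat the two directions separately. For $(\Leftarrow)$, I will use the same basic fact that the median is in every pairwise interval: then $m(x,y,z)\in[x,y]\cap[y,z]$, while $y$ also lies in this intersection by Med 2, so the hypothesis that the intersection is $\{y\}$ forces $m(x,y,z)=y$, which is exactly $y\in[x,z]$.

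For $(\Rightarrow)$, assume $m(x,y,z)=y$. One inclusion is immediate: $y\in[x,y]\cap[y,z]$ by Med 2. For the reverse, let $a\in[x,y]\cap[y,z]$, so $m(x,y,a)=a$ and $m(y,z,a)=a$. The key computation is
\[
a=m(x,y,a)=m(x,y,m(y,z,a))=m(m(y,z,a),x,y),
\]
where the last step uses Med 1. Applying Med 3 gives $m(y,m(z,x,y),m(a,x,y))$, and both inner medians collapse: $m(z,x,y)=m(x,y,z)=y$ by hypothesis, and $m(a,x,y)=m(x,y,a)=a$. Hence $a=m(y,y,a)=y$ by Med 2. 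This shows the intersection is exactly $\{y\}$.

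I do not anticipate a serious obstacle; the only care needed is in keeping track of the arguments in each application of Med 1 and Med 3, and in invoking the (standard, earlier) fact that a median of three elements belongs to each pairwise interval.
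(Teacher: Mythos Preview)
Your proof is correct. Note, however, that the paper does not supply its own proof of this lemma: it is stated as a citation of Roller's (Int~3) and (Int~6) and used as a black box thereafter. So there is no in-paper argument to compare against; what you have done is provide a self-contained derivation from the axioms (Med~1)--(Med~3), together with the earlier Roller fact (Int~2) that $m(a,b,c)\in[a,b]\cap[b,c]\cap[a,c]$. All three computations check out: in part~(1) the expansion $m(m(x,y,z),x,u)=m(x,m(y,x,u),m(z,x,u))$ collapses as you describe to $u=m(x,u,w)$ with $w=m(x,z,u)\in[x,u]$, forcing $u=w$; and in part~(2) both directions are handled cleanly, the key step $m(m(y,z,a),x,y)=m(y,m(z,x,y),m(a,x,y))=m(y,y,a)=y$ being a straightforward application of Med~3 followed by the hypotheses and Med~2.
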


A subset $C\subset M$ is called \textit{convex}, if for every $x,y\in C$, $[x,y]\subset C$. An important property of convex sets is what usually known as \textit{Helly's Theorem }.

\begin{lemma}[{\cite[Theorem 2.2]{rol98}}] \label{lem:helly}
 If $C_1,...,C_n$ are convex, and for every $i\neq j\ \ C_i\cap C_j\neq \varnothing$, then also $\overset{n}{\underset{i=1}{\cap}}C_i \neq \varnothing$.
\end{lemma}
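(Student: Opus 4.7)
The plan is to prove the statement by induction on $n$, using $n=3$ as the crucial base case (the case $n=2$ being exactly the hypothesis). For $n=3$, I would choose points $x_{ij} \in C_i \cap C_j$ for each pair $\{i,j\} \subset \{1,2,3\}$ and form the median $p := m(x_{12}, x_{13}, x_{23})$. The key observation is that for any $a,b,c \in M$ one has $m(a,b,c) \in [a,b]$. Applying this to $(x_{12}, x_{13}, x_{23})$ yields $p \in [x_{12}, x_{13}]$, and since $x_{12}, x_{13} \in C_1$ with $C_1$ convex we conclude $p \in C_1$; the analogous arguments give $p \in C_2 \cap C_3$, and hence $p \in C_1 \cap C_2 \cap C_3$.

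For the inductive step, assuming the statement for $n-1$ families, I would set $D_i := C_i \cap C_n$ for $i = 1, \ldots, n-1$. Intersections of convex sets are convex (directly from the definition, since $[x,y] \subset C_i \cap C_n$ whenever both endpoints lie in both sets), so each $D_i$ is convex. For any distinct $i,j < n$, the three sets $C_i, C_j, C_n$ pairwise intersect by hypothesis, so by the $n=3$ case $D_i \cap D_j = C_i \cap C_j \cap C_n \neq \varnothing$. The inductive hypothesis applied to $D_1, \ldots, D_{n-1}$ now produces a point in $\bigcap_{i=1}^{n-1} D_i$, which coincides with $\bigcap_{i=1}^{n} C_i$.

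The main obstacle, as usual in median-algebra arguments, is verifying the auxiliary fact that $m(a,b,c) \in [a,b]$, i.e., that $m(a,b,m(a,b,c)) = m(a,b,c)$, directly from axioms (\textbf{Med 1})--(\textbf{Med 3}). The route I would take is to first use (\textbf{Med 1}) to rewrite $m(a,b,c) = m(c,a,b)$, then apply (\textbf{Med 3}) with $x=c$, $y=a$, $z=b$, $u=a$, $v=b$ to obtain $m(m(c,a,b),a,b) = m(c, m(a,a,b), m(b,a,b))$; axiom (\textbf{Med 2}) (combined with (\textbf{Med 1})) collapses the inner medians to $a$ and $b$ respectively, giving $m(c,a,b) = m(a,b,c)$, as required. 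This identity is presumably also available as part of (Int 3) in Roller's list, so one may prefer to simply cite it rather than reprove it.
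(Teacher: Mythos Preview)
Your proof is correct and is exactly the standard argument for Helly's theorem in median algebras; the base case $n=3$ via the median of three pairwise intersection points, followed by the reduction $D_i=C_i\cap C_n$, is precisely how Roller proves it. The paper itself does not give a proof of this lemma at all---it simply cites \cite[Theorem~2.2]{rol98}---so there is nothing further to compare, and your argument is self-contained and complete. Your verification of $m(a,b,m(a,b,c))=m(a,b,c)$ from (\textbf{Med~1})--(\textbf{Med~3}) is also correct.
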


Let $C\subset M$ be a convex set and $x\in M$. We say that $y\in C$ is the \textit{gate} for $x$ in $C$, if $y\in [x,z]$, for every $z\in C$. It follows form Lemma~\ref{lem:int6}(2), that if exists, the gate is indeed unique. We say that $C$ is \textit{gate-convex} if for every $x\in M$, there exists a gate in $C$. In this case, we define the \textit{gate-projection} to $C$, $\pi_C:M\rightarrow C $, where $\pi_C(x)$ is the gate of $x$ in $C$. An example for gate-convex set is the interval $[x,y]$ with gate-projection $\pi_{[x,y]}(z)=m(x,y,z)$, for every $x,y\in M$. Notice that as a result of Lemma~\ref{lem:int6}(2), gate-convex sets are always convex.  

By \cite[Proposition~2.1]{fio20}, if $\phi: M\rightarrow M$ is a gate-projection, then for every $x,y,z\in M$ we have $\phi(m(x,y,z))=m(\phi(x),\phi(y),\phi(z))$. In particular, gate-projections map intervals to intervals. In addition, the following is true

\begin{lemma}[{\cite[Lemma~2.2]{fio20}}] \label{lem:fio2}
\begin{enumerate}
    \item If $C_1\subset M$ is convex and $C_2\subset M$ is gate-convex, the projection $\pi_{C_2}(C_1)$ is convex. If moreover, $C_1\cap C_2\neq \varnothing$, we have $\pi_{C_2}(C_1)=C_1\cap C_2$.
    \item If $C_1, C_2\subset M$ are gate-convex and $C_1\cap C_2\neq \varnothing $, then $C_1\cap C_2 $ is gate-convex with gate-projection $\pi_{C_2}\circ \pi_{C_1}=\pi_{C_1}\circ \pi_{C_2}$. In particular, if $C_2\subset C_1$, then $\pi_{C_2}=\pi_{C_2}\circ \pi_{C_1}$.
\end{enumerate}
\end{lemma}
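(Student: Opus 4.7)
The plan is to handle the two parts in order, with part (2) leveraging part (1).

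For the convexity claim in part (1), I would start from two points $\pi_{C_2}(x),\pi_{C_2}(y)$ with $x,y\in C_1$ and an arbitrary $u\in[\pi_{C_2}(x),\pi_{C_2}(y)]$, and look for a point of $C_1$ that $\pi_{C_2}$ sends to $u$. The natural candidate is $v:=m(x,y,u)$. It lies in $[x,y]$, hence in $C_1$ by convexity of $C_1$. Applying $\pi_{C_2}$, which is a median morphism, gives $\pi_{C_2}(v)=m(\pi_{C_2}(x),\pi_{C_2}(y),\pi_{C_2}(u))$. Since $C_2$ is gate-convex, hence convex, and contains both $\pi_{C_2}(x)$ and $\pi_{C_2}(y)$, the interval between them lies in $C_2$, so $u\in C_2$ and $\pi_{C_2}(u)=u$. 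Using $u\in[\pi_{C_2}(x),\pi_{C_2}(y)]$ then yields $\pi_{C_2}(v)=u$, so $u\in\pi_{C_2}(C_1)$. For the ``moreover'' clause, the inclusion $C_1\cap C_2\subset\pi_{C_2}(C_1)$ is immediate because $\pi_{C_2}$ fixes $C_2$; conversely, for $x\in C_1$, pick any $z\in C_1\cap C_2$ and observe that the gate property gives $\pi_{C_2}(x)\in[x,z]$, which lies in $C_1$ by convexity of $C_1$.

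For part (2), I would propose $\pi_{C_2}\circ\pi_{C_1}$ as the candidate gate-projection onto $C_1\cap C_2$. Part (1) already guarantees $\pi_{C_2}(\pi_{C_1}(x))\in\pi_{C_2}(C_1)=C_1\cap C_2$, so only the gate property needs checking. Given $z\in C_1\cap C_2$, the membership $z\in C_1$ gives $\pi_{C_1}(x)\in[x,z]$, and the membership $z\in C_2$ gives $\pi_{C_2}(\pi_{C_1}(x))\in[\pi_{C_1}(x),z]$. To conclude $\pi_{C_2}(\pi_{C_1}(x))\in[x,z]$ I would invoke the elementary chain principle that $y\in[x,z]$ together with $w\in[y,z]$ imply $w\in[x,z]$; this is Lemma~\ref{lem:int6}(1) applied to the interval $[z,x]=[x,z]$. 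Thus $C_1\cap C_2$ is gate-convex with gate-projection $\pi_{C_2}\circ\pi_{C_1}$. By the symmetric argument with the roles of $C_1$ and $C_2$ swapped, $\pi_{C_1}\circ\pi_{C_2}$ is also the gate-projection, so the two compositions coincide by uniqueness of gates. The ``in particular'' clause then drops out by taking $C_1\cap C_2=C_2$.

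The main obstacle I foresee is the very first step, the convexity of $\pi_{C_2}(C_1)$. The image of a convex set under a median morphism need not be convex in general, so this fact is not formal from $\pi_{C_2}$ being a median morphism; one really has to construct a preimage inside $C_1$ for each $u$ in the target interval, and recognizing that the $m(x,y,u)$ candidate does the job is the crucial move. Once that piece is in place, everything else is a bookkeeping exercise combining the median-morphism identity for gate-projections with the interval manipulations of Lemma~\ref{lem:int6}.
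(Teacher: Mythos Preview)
Your argument is correct. The construction $v=m(x,y,u)$ for the convexity of $\pi_{C_2}(C_1)$ is exactly the right move, and your interval chasing in part (2) via Lemma~\ref{lem:int6}(1) is clean; the uniqueness of gates then gives the commutation of the two compositions for free.

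There is nothing to compare against, however: the paper does not prove this lemma at all. It is quoted verbatim from \cite[Lemma~2.2]{fio20} and used as a black box in the proof of Proposition~\ref{prop:seperation1}. So your proposal is not an alternative to the paper's proof but rather a self-contained justification of a cited result. If anything, your write-up is a useful supplement, since the paper relies on the ``in particular'' clause of part (2) several times without further comment.
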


Given a disjoint partition of a median algebra $M$ into non-empty convex subsets, $M=\mathfrak{h} \sqcup \mathfrak{h}^*$,
we say that $\mathfrak{h}$ and $\mathfrak{h}^*$ are complementary {\em half-spaces} in $M$ and we regard the unordered pair
$\mathfrak{w}=\{\mathfrak{h},\mathfrak{h}^*\}$ as a {\em wall} in $M$.
For disjoint subsets $A,B\subset M$, we say that $\mathfrak{h}$ {\em separates} $A$ from $B$ if
$A\subset \mathfrak{h}$ and $B\subset \mathfrak{h}^*$.
We denote by $\Delta(A,B)$ the collection of half-spaces that separate $A$ from $B$.
By \cite[Theorem 2.8]{rol98}, if $A$ and $B$ are convex then $\Delta(A,B)$ is not empty. 
Given a wall $\mathfrak{w}$,
we say that $\mathfrak{w}$ separates $A$ and $B$ if $\mathfrak{w}\cap \Delta(A,B)$ is not empty. 
For $a\notin B$ we denote $\Delta(a,B):=\Delta(\{a\},B)$.

\begin{lemma} \label{lem:pointrep}
For disjoint non-empty subsets $A,B\subset M$, if $A$ is gate-convex then there exists $a\in A$ 
such that $\Delta(A,B)=\Delta(a,B)$.
\end{lemma}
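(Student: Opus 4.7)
The plan is straightforward once one exploits the defining property of the gate. Since $B$ is non-empty, pick any $b_0\in B$ and set $a:=\pi_A(b_0)\in A$, the gate of $b_0$ in $A$. I claim this is the $a$ we want.

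The inclusion $\Delta(A,B)\subset\Delta(a,B)$ is immediate: if a half-space $\mathfrak{h}$ contains all of $A$ and $\mathfrak{h}^*$ contains $B$, then in particular $a\in A\subset\mathfrak{h}$ and $B\subset\mathfrak{h}^*$, so $\mathfrak{h}\in\Delta(a,B)$. No gate-convexity is used here; this works for any $a\in A$.

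For the reverse inclusion $\Delta(a,B)\subset\Delta(A,B)$, suppose $\mathfrak{h}$ is a half-space with $a\in\mathfrak{h}$ and $B\subset\mathfrak{h}^*$; I must show $A\subset\mathfrak{h}$. Fix an arbitrary $a'\in A$. By the definition of the gate, $a\in[b_0,a']$. Now suppose toward contradiction that $a'\in\mathfrak{h}^*$. Since $b_0\in B\subset\mathfrak{h}^*$ as well, and the half-space $\mathfrak{h}^*$ is convex, the whole interval $[b_0,a']$ lies in $\mathfrak{h}^*$; hence $a\in\mathfrak{h}^*$, contradicting $a\in\mathfrak{h}$. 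Therefore $a'\in\mathfrak{h}$, and since $a'\in A$ was arbitrary, $A\subset\mathfrak{h}$.

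There is really no obstacle to speak of: the proof rests entirely on the two elementary inputs that (i) the gate $\pi_A(b_0)$ lies in every interval $[b_0,a']$ with $a'\in A$, which is exactly the definition of a gate and requires gate-convexity of $A$, and (ii) each half-space $\mathfrak{h}^*$ is convex, which is built into the definition of a wall. The only care needed is to notice that a single choice of $b_0\in B$ suffices; one does not need to vary $b$ with $\mathfrak{h}$, because for any $\mathfrak{h}\in\Delta(a,B)$ the specific point $b_0$ already lies in $\mathfrak{h}^*$ by hypothesis.
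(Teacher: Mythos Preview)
Your proof is correct and essentially identical to the paper's own argument: both pick $b_0\in B$, set $a=\pi_A(b_0)$, and derive a contradiction from $a\in[b_0,a']\subset\mathfrak{h}^*$ using the convexity of $\mathfrak{h}^*$.
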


\begin{proof}
Fix $b\in B$ and let $a=\pi_A(b)$. Clearly, $\Delta(A,B) \subset \Delta(a,B)$. 
We will show the other inclusion.
Fix $\mathfrak{h}\in \Delta(a,B)$, and let $a' \in A\cap \mathfrak{h}^*$. By the definition of the gate-projection and the fact that $\mathfrak{h}^*$ is convex, we have $a\in [a',b]\subset\mathfrak{h}^*$, contradicting $a\in \mathfrak{h}$.
Thus, indeed, $A\subset \mathfrak{h}$ and we conclude that $\mathfrak{h}\in \Delta(A,B)$.
\end{proof}

We denote by $\mathscr{H}$ and by $\mathscr{W}$ the collections of all half-spaces and all walls in $M$, respectively.
There is a natural map $\mathscr{H}\to \mathscr{W}$, given by $\mathfrak{h}\mapsto \{\mathfrak{h}, \mathfrak{h}^*\}$.
Fix a section $\sigma:\mathscr{W}\to \mathscr{H}$, that is, a half-space representation for every wall in $M$. We denote by $\chi_A$ the characteristic function of a set A.
For each subset $W\subset \mathscr{W}$ we get a median morphism
\[ \iota_W:M\to \{0,1\}^W, \quad x\mapsto (\chi_{\sigma(\mathfrak{w})}(x))_\mathfrak{w}. \]
We say that a set of walls $W$ is {\em separating} if $\iota_W$ is injective,
that is, the walls in $W$ separate the points of $M$.
We say that $W$ is {\em transverse} if $\iota_W$ is surjective.
Note that these properties are independent of the choice of the section $\sigma$.
Two distinct walls $\mathfrak{w}_1,\mathfrak{w}_2\in \mathscr{W}$ are said to be transverse if $\{\mathfrak{w}_1,\mathfrak{w}_2\}$
is transverse.
We record the following lemma, which proof is an immediate application of Helly's Theorem, Lemma~\ref{lem:helly}.

\begin{lemma} \label{lem:trans}
If $W\subset \mathscr{W}$ is finite then
it is transverse if and only every pair of distinct walls in $W$ is transverse.
\end{lemma}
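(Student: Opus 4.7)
The plan is to observe that transversality of a set of walls $W$ amounts to a combinatorial statement about half-spaces: $W$ is transverse exactly when, for every choice function assigning to each $\mathfrak{w}\in W$ one of its two half-spaces, the resulting intersection is non-empty. Under this reformulation, the lemma becomes a direct application of Helly's Theorem (Lemma~\ref{lem:helly}).

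First I would unpack the definition. Since $\iota_W(x)_\mathfrak{w}=\chi_{\sigma(\mathfrak{w})}(x)$, a point $x\in M$ maps to a prescribed tuple $(\epsilon_\mathfrak{w})_\mathfrak{w}\in\{0,1\}^W$ if and only if $x$ lies in the intersection $\bigcap_{\mathfrak{w}\in W}\mathfrak{h}_\mathfrak{w}$, where $\mathfrak{h}_\mathfrak{w}=\sigma(\mathfrak{w})$ when $\epsilon_\mathfrak{w}=1$ and $\mathfrak{h}_\mathfrak{w}=\sigma(\mathfrak{w})^*$ when $\epsilon_\mathfrak{w}=0$. Hence surjectivity of $\iota_W$ is equivalent to the statement that every such ``choice'' intersection is non-empty. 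Note also that transversality is independent of the choice of section, as stated.

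The forward implication is immediate: restricting a surjective $\iota_W$ to any pair of coordinates yields a surjective map $M\to\{0,1\}^{\{\mathfrak{w}_1,\mathfrak{w}_2\}}$, so any two distinct walls in $W$ are transverse.

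For the converse, assume $W=\{\mathfrak{w}_1,\dots,\mathfrak{w}_n\}$ and every pair $\{\mathfrak{w}_i,\mathfrak{w}_j\}$ is transverse. Fix any tuple $(\epsilon_1,\dots,\epsilon_n)\in\{0,1\}^n$ and let $\mathfrak{h}_i$ be the corresponding half-space of $\mathfrak{w}_i$ as above. Each $\mathfrak{h}_i$ is convex by the definition of a half-space. Pairwise transversality of $\{\mathfrak{w}_i,\mathfrak{w}_j\}$ guarantees that all four half-space intersections are non-empty; in particular $\mathfrak{h}_i\cap\mathfrak{h}_j\neq\varnothing$ for every $i\neq j$. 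Helly's Theorem (Lemma~\ref{lem:helly}) then gives $\bigcap_{i=1}^n \mathfrak{h}_i\neq\varnothing$, and picking a point in this intersection exhibits a preimage of $(\epsilon_1,\dots,\epsilon_n)$ under $\iota_W$. Since the tuple was arbitrary, $\iota_W$ is surjective and $W$ is transverse. The only potential subtlety is making sure convexity of half-spaces is invoked correctly, but this is built into the definition, so Helly's Theorem applies with no further work and the argument is essentially a one-line appeal to it.
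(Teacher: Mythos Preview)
Your proof is correct and follows exactly the approach the paper indicates: the forward direction is immediate by restricting $\iota_W$ to a pair of coordinates, and the converse is precisely the application of Helly's Theorem (Lemma~\ref{lem:helly}) to the convex half-spaces selected by an arbitrary tuple in $\{0,1\}^W$. The paper itself records the lemma with only the remark that its proof is an immediate application of Helly's Theorem, so your write-up simply spells out that one-line argument in full.
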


\section{Compact Median Algebras} \label{sec:compact}

A \textit{Topological median algebra} is a median algebra $M$ endowed with a  topology for which the median operator is continuous. 
In this note we will always assume that topologies are Hausdorff.

\begin{definition} \label{def:locc}
A topological median algebra $M$ is said to be \textit{locally convex} if each of its points has a basis of convex neighborhoods
and it is said to be \textit{locally open-convex} if each of its points has a basis of open and convex neighborhoods.
\end{definition}

\begin{example} \label{exm:locc}
\begin{enumerate}
\item The class of locally open-convex median algebras is closed under taking arbitrary products, hence for every index set I, 
the cube $\{0,1\}^I$ is a locally open-convex compact median algebra, with respect to the product topology and product median structure.
\item The class of locally open-convex median algebras is closed under taking median subalgebras. Therefore, any closed median subalgebra of a cube, such as the Roller compactification of a (possibly infinite dimensional) CAT(0) cube complex, is locally open-convex and compact.
\item By \cite[Lemma~3.1 and Lemma~3.2]{Bowditch2020}, every median space $(X,\rho)$ admits a pseudo-metric $\sigma$ which open balls are convex. Assume $X$ has a finite rank.
Then by \cite[Lemma~6.2]{Bowditch2020}, $\sigma$ is bilipschitz equivalent to $\rho$, 
thus it separates the points of $X$. It follows that $X$, as well as all its intervals, are locally open-convex. 
By \cite[Corollary 2.20]{fio20}, the completion of $X$ has compact intervals and its Roller compactification $\bar{X}$ is defined in 
\cite[Definition 4.13]{fio20}. By \cite[Definition 4.1]{fio20}, $\bar{X}$ is a median subalgebra of the product of all intervals in $X$.
As in the previous example, it follows that $\bar{X}$ is locally open-convex and compact.
\end{enumerate}
\end{example}

For the rest of the section we let $M$ be a {\em locally open-convex and compact topological median algebra}. 

We note that the closure of a convex set in $M$ is convex.
Indeed, if $C\subset M$ is convex, we have $C\times C\times M \subset m^{-1}(\bar{C})$
and, as $m^{-1}(\bar{C})$ is closed, we have $\bar{C}\times \bar{C}\times M\subset m^{-1}(\bar{C})$,
thus also $\bar{C}$ is convex.
Since $M$ is a normal topological space, it follows that every point in $M$ also has a basis of {\em closed} convex neighborhoods. 
We also note that, by \cite[Lemma 2.6 and Lemma 2.7]{fio20},
a convex set in $M$ is gate-convex if and only if it is closed and the gate-projections to the gate-convex sets are continuous.
In particular, every interval is closed (this is true in fact in every Hausdorff topological median algebra).

A half-space $\mathfrak{h}$ in $M$ is said to be {\em admissible} if it is open and $\mathfrak{h}^*$ has a non empty interior.
If further $\mathfrak{h}^*$ is open, we say that $\mathfrak{h}$ is {\em clopen}.
We denote by $\mathscr{H}^\circ$ the collections of all clopen half-spaces and 
we denote by $\mathscr{W}^\circ$ the collection of all corresponding walls.
For a subset $W\subset \mathscr{W}^\circ$, the median morphism $\iota_W:M\to \{0,1\}^W$ is clearly continuous.
By compactness, we get the following upgrade of Lemma~\ref{lem:trans}.

\begin{lemma} \label{lem:cube}
For every $W\subset \mathscr{W}^\circ$, $W$ is transverse
if and only if every pair of distinct walls in $W$ is transverse.
If moreover $W$ is separating then $\iota_W$ is an isomorphism of topological median algebras,
thus $M$ is a cube.
\end{lemma}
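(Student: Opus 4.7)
The plan is to prove only the non-trivial direction of the equivalence; the forward implication is immediate from the definitions. Throughout I fix a section $\sigma:\mathscr{W}^\circ \to \mathscr{H}^\circ$, so that $\iota_W(x)_\mathfrak{w} = \chi_{\sigma(\mathfrak{w})}(x)$.

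Assume every pair of distinct walls in $W$ is transverse. Given a target value $f \in \{0,1\}^W$, I would define, for each $\mathfrak{w}\in W$,
\[ A_\mathfrak{w} := \begin{cases} \sigma(\mathfrak{w}) & \text{if } f(\mathfrak{w}) = 1, \\ \sigma(\mathfrak{w})^* & \text{if } f(\mathfrak{w}) = 0. \end{cases} \]
The crucial point is that since $\mathfrak{w}\in \mathscr{W}^\circ$, both $\sigma(\mathfrak{w})$ and $\sigma(\mathfrak{w})^*$ are clopen, so each $A_\mathfrak{w}$ is a closed subset of the compact space $M$. Any $x\in \bigcap_{\mathfrak{w}\in W} A_\mathfrak{w}$ satisfies $\iota_W(x)=f$, so to show surjectivity of $\iota_W$ it suffices to verify the finite intersection property for $\{A_\mathfrak{w}\}_{\mathfrak{w}\in W}$. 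For any finite $W'\subset W$, pairwise transversality combined with Lemma~\ref{lem:trans} gives that $W'$ is transverse, hence $\iota_{W'}:M\to \{0,1\}^{W'}$ is surjective; a preimage of $f|_{W'}$ lies in $\bigcap_{\mathfrak{w}\in W'} A_\mathfrak{w}$. Compactness of $M$ then yields a point in the full intersection.

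For the \emph{moreover} clause, once $W$ is separating and transverse, $\iota_W$ is a continuous bijective median morphism from the compact space $M$ onto the Hausdorff space $\{0,1\}^W$. Any such map is automatically a homeomorphism, and being also a median morphism it is an isomorphism of topological median algebras; this exhibits $M$ as a cube.

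I do not anticipate a genuine obstacle: the argument is just a compactness upgrade of Lemma~\ref{lem:trans}. The only subtle point is recognising why the hypothesis $W\subset \mathscr{W}^\circ$ (rather than merely admissibility) is essential, namely it is precisely what makes each $A_\mathfrak{w}$ closed, so that the finite intersection property argument can be run in $M$.
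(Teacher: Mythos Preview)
Your proof is correct and is essentially the same approach as the paper's: the paper simply asserts that the first part ``follows immediately from Lemma~\ref{lem:trans} by the compactness of $M$,'' and that the second follows since a continuous bijection from a compact space to a Hausdorff space is a homeomorphism. You have just unpacked the compactness argument explicitly via the finite intersection property of the closed sets $A_\mathfrak{w}$, and your remark about why clopenness (rather than mere admissibility) is needed is a useful clarification.
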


\begin{proof}
The first part follows immediately from Lemma~\ref{lem:trans} by the compactness of $M$.
If further $W$ is separating then $\iota_W$ is a continuous isomorphism of median algebras
and it is closed, as $M$ is compact, thus an homeomorphism.
\end{proof}

Since in a cube $\mathscr{W}^\circ$ is clearly transverse and separating, we immediately
get the following corollary of Lemma~\ref{lem:cube}.

\begin{corollary} \label{cor:cube}
$M$ is a cube if and only if $\mathscr{W}^\circ$ is separating and 
every pair of distinct walls in $\mathscr{W}^\circ$ is transverse.
\end{corollary}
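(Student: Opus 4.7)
The plan is to derive the corollary as a direct specialization of Lemma~\ref{lem:cube} applied to the particular set $W=\mathscr{W}^\circ$, together with a sanity check of the forward implication on the model cube.

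For the ``only if'' direction, I would assume $M$ is a cube, i.e.\ there is an isomorphism of topological median algebras $M\cong \{0,1\}^I$ for some index set $I$. For each $i\in I$, the coordinate projection $\pi_i:M\to\{0,1\}$ yields two complementary fibers $\mathfrak{h}_i=\pi_i^{-1}(0)$ and $\mathfrak{h}_i^*=\pi_i^{-1}(1)$, both open (hence clopen) and convex, since $\{0,1\}$ carries the discrete topology and convexity is preserved by pulling back along median morphisms. So each coordinate wall $\mathfrak{w}_i$ lies in $\mathscr{W}^\circ$. The collection $\{\mathfrak{w}_i\}_{i\in I}$ obviously separates points of $\{0,1\}^I$, and any two coordinate walls $\mathfrak{w}_i,\mathfrak{w}_j$ are transverse since all four intersections $\mathfrak{h}_i^{(*)}\cap\mathfrak{h}_j^{(*)}$ are non-empty in a product. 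A fortiori the larger set $\mathscr{W}^\circ$ is separating and every pair in $\mathscr{W}^\circ$ is transverse.

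For the ``if'' direction, I would simply apply Lemma~\ref{lem:cube} to the set $W=\mathscr{W}^\circ\subset\mathscr{W}^\circ$. The hypothesis that every two distinct walls in $\mathscr{W}^\circ$ are transverse is precisely the pairwise transversality hypothesis of Lemma~\ref{lem:cube}, so Lemma~\ref{lem:cube} yields that $\mathscr{W}^\circ$ itself is transverse, i.e.\ $\iota_{\mathscr{W}^\circ}:M\to\{0,1\}^{\mathscr{W}^\circ}$ is surjective. Combined with the assumed separation, the second assertion of Lemma~\ref{lem:cube} upgrades $\iota_{\mathscr{W}^\circ}$ to an isomorphism of topological median algebras, exhibiting $M$ as a cube.

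There is no genuine obstacle here: both directions are packaged inside Lemma~\ref{lem:cube}, and the only content beyond that lemma is the elementary observation that the coordinate walls of a product $\{0,1\}^I$ are clopen, pairwise transverse and separating, which is purely formal. The proof is therefore expected to be a few lines long.
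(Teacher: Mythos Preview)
Your approach is the same as the paper's: the backward direction is Lemma~\ref{lem:cube} applied to $W=\mathscr{W}^\circ$, and the forward direction is the elementary verification on the model cube $\{0,1\}^I$.

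There is one logical slip in the forward direction. From the fact that the coordinate walls $\{\mathfrak{w}_i\}_{i\in I}$ are pairwise transverse it does \emph{not} follow ``a fortiori'' that every pair of walls in the larger set $\mathscr{W}^\circ$ is transverse: enlarging a pairwise-transverse family of walls can certainly destroy pairwise transversality. (Separation, by contrast, really is inherited upward, so that half of your sentence is fine.) What is actually true, and what the paper's ``clearly'' is implicitly invoking, is that in $\{0,1\}^I$ \emph{every} half-space is a coordinate half-space, so that $\mathscr{W}^\circ$ coincides with the set of coordinate walls and there is nothing further to check. This is an easy exercise: if $\mathfrak{h}$ is any half-space, pick $x\in\mathfrak{h}$ and $y\in\mathfrak{h}^*$ differing in a single coordinate $i$; then the gate projection onto $[x,y]=\{x,y\}$ agrees with $\pi_i$, and Lemma~\ref{lem:fio2}(1) forces $\mathfrak{h}=\pi_i^{-1}(\{x_i\})$. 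With this remark inserted in place of ``a fortiori'', your argument is complete and identical in spirit to the paper's.
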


We will use Corollary~\ref{cor:cube} in the next section.
In order to apply it we will use the existence of enough open and admissible half-spaces. 
These are provided by the next two results.

\begin{proposition} \label{prop:seperation1}
Let $U,C\subset M$ be two disjoint non-empty convex sets such that $U$ is open and $C$ is closed. 
Then there exists an open half-space $\mathfrak{h}\in \Delta(U,C)$.
\end{proposition}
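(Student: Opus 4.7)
My plan is to apply Zorn's lemma to find a maximal closed convex subset $F_0 \subset M$ containing $C$ and disjoint from $U$, and then to show that $\mathfrak{h} := M \setminus F_0$ is convex; since $\mathfrak{h}$ is automatically open, contains $U$, and is disjoint from $C$, it will then be the desired open half-space in $\Delta(U, C)$.

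For the Zorn step, let $\mathcal{F}$ denote the collection of closed convex subsets $F \subset M$ with $C \subset F$ and $F \cap U = \varnothing$, ordered by inclusion. For a chain $\{F_\alpha\} \subset \mathcal{F}$, the set $\overline{\bigcup_\alpha F_\alpha}$ is convex by the closure-of-convex remark recorded at the start of \S\ref{sec:compact}, and it remains disjoint from $U$ because $U$ is open; hence $\mathcal{F}$ has maximal elements. Fix one, $F_0$, and suppose toward a contradiction that $\mathfrak{h}$ is not convex: there exist $x_1, x_2 \in \mathfrak{h}$ and $y \in [x_1, x_2] \cap F_0$. For each $i$, the closed convex hull $\overline{\operatorname{conv}(F_0 \cup \{x_i\})}$ strictly contains $F_0$ and therefore meets $U$ by maximality. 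A brief Med 3 argument shows that $\operatorname{conv}(F_0 \cup \{x\}) = \bigcup_{a \in F_0}[x, a]$, and combined with the openness of $U$ this yields $a_i \in F_0$ and $w_i \in U \cap [x_i, a_i]$. Now set $s := m(w_1, w_2, y)$; since $w_1, w_2 \in U$ and $U$ is convex, $s \in [w_1, w_2] \subset U$, so it suffices to show $s \in F_0$.

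The main obstacle is precisely this last claim, which I would establish by a wall-separation argument. If $s \notin F_0$, then by Roller's separation theorem \cite[Theorem~2.8]{rol98}, applied to the disjoint convex sets $\{s\}$ and $F_0$, there exists a half-space $\mathfrak{k}$ with $F_0 \subset \mathfrak{k}$ and $s \in \mathfrak{k}^*$, and its characteristic function $\chi_\mathfrak{k}: M \to \{0,1\}$ is a median morphism. Applying $\chi_\mathfrak{k}$ to $s = m(w_1, w_2, y)$, together with $\chi_\mathfrak{k}(s) = 0$ and $\chi_\mathfrak{k}(y) = 1$, forces $\chi_\mathfrak{k}(w_1) = \chi_\mathfrak{k}(w_2) = 0$; applying it to $w_i = m(x_i, a_i, w_i)$ with $\chi_\mathfrak{k}(a_i) = 1$ forces $\chi_\mathfrak{k}(x_i) = 0$; and finally applying it to $y = m(x_1, x_2, y)$ gives $\chi_\mathfrak{k}(y) = m(0, 0, 1) = 0$, contradicting $y \in F_0 \subset \mathfrak{k}$.
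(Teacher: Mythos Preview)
Your argument is correct and takes a route that is dual to the paper's. Both proofs use Zorn's lemma, but on opposite sides of the partition: the paper first reduces $C$ to a single point $c$ via Lemma~\ref{lem:pointrep} and then maximises among \emph{open} convex sets containing $U$ and avoiding $c$, whereas you maximise among \emph{closed} convex sets containing $C$ and avoiding $U$. The contradiction mechanisms are also different. The paper works entirely with gate projections onto carefully chosen intervals $[\omega,z]$ and $[\omega,c]$, invoking only Lemma~\ref{lem:int6} and Lemma~\ref{lem:fio2}; your argument instead uses the join formula $\operatorname{conv}(F_0\cup\{x\})=\bigcup_{a\in F_0}[x,a]$ (which, as you note, follows from Med~3 together with (Int~3)) and then finishes with Roller's abstract separation theorem \cite[Theorem~2.8]{rol98} applied to $\{s\}$ and $F_0$. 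Your approach is arguably more conceptual---the half-space calculus with $\chi_{\mathfrak{k}}$ is very clean---but it imports Roller's theorem as a black box, while the paper's proof is more self-contained within the interval/gate toolkit developed in \S\ref{sec:review}. Either way, the two arguments are essentially mirror images of one another.
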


\begin{proof}
By Lemma~\ref{lem:pointrep}, there exists a point $c\in C$ such that $\Delta(U,c)=\Delta(U,C)$.
We fix such $c$ and argue to show that there exists an open half-space $\mathfrak{h}\in \Delta(U,c)$.

We order the collection
\[ P= \{V\subset M \mid V \mbox{ is an open-convex set, } U\subset V \mbox{ and } c \notin V\} \]
by inclusion and note that it is non empty, as $U\in P$.
By Zorn's Lemma, $P$ has a maximal element,
as the union of any chain in $P$ forms an upper bound.
We fix such a maximal element $\mathfrak{h}$ and denote $\mathfrak{h}^*=M\setminus \mathfrak{h}$.
We argue to show that $\mathfrak{h}^*$ is convex, thus $\mathfrak{h}$ is a half-space.

Assume for the sake of contradiction that $\mathfrak{h}^*$ is not convex, that is, there exist 
$x,y\in \mathfrak{h}^*$ such that $[x,y]\cap \mathfrak{h}\neq \varnothing$. 
Fix $z\in [x,y]\cap \mathfrak{h}$ and set $\omega =\pi_{[x,z]}(c)=m(x,z,c)$.
In both cases, $\omega\in \mathfrak{h}$ and $\omega\in \mathfrak{h}^*$,
we will derive a contradiction to the maximality of $\mathfrak{h}$
by establishing $\mathfrak{h}\subsetneq V \in P$.

We assume first that $\omega\in \mathfrak{h}^*$ and denote $V\coloneqq \pi_{[\omega,z]}^{-1}(\mathfrak{h}\cap [\omega,z])$. Recalling that $\pi_{[\omega,z]}$ is a continuous median morphism, $V$ is open and convex.
Applying Lemma~\ref{lem:fio2}(1) for $C_1=\mathfrak{h}$ and $C_2=[\omega, z]$ we conclude that $\mathfrak{h}\subset V$.
In particular, $U\subset V$.
By Lemma~\ref{lem:int6}(1), since $\omega \in [z,x]$, we have $[z,\omega] \subset [z,x]$ and we get by Lemma~\ref{lem:fio2}(2),
$\pi_{[z,\omega]}(c)=\pi_{[z,\omega]} \circ \pi_{[z,x]}(c)=\pi_{[z,\omega]}(\omega)=\omega$.
Using our assumption that $\omega\in \mathfrak{h}^*$, it follows that $c\notin V$, thus $V\in P$.
Similarly, we use $z\in [x,y]$ 
to get
$\pi_{[z,\omega]}(y)=\pi_{[z,\omega]}\circ \pi_{[z,x]}(y)=\pi_{[z,\omega]}(z)=z\in  \mathfrak{h}\cap [\omega,z]$.
We conclude that $y\in V$, therefore $\mathfrak{h}\subsetneq V$,
contradicting the maximality of $\mathfrak{h}$.

We assume now that $\omega \in \mathfrak{h}$ and denote $V\coloneqq\pi_{[\omega,c]}^{-1}(\mathfrak{h}\cap [\omega,c])$.
Again, $V$ is clearly open and convex.
Applying now Lemma~\ref{lem:fio2}(1) for $C_1=\mathfrak{h}$ and $C_2=[\omega, c]$ we conclude that $\mathfrak{h}\subset V$.
Since $\pi_{[\omega,c]}(c)=c\notin \mathfrak{h}$ we have that $c\notin V$, thus $V\in P$.
Again, by Lemma~\ref{lem:int6}(1), as $\omega \in [z,c]$, we have $[\omega,c] \subset [z,c]$ and using Lemma~\ref{lem:fio2}(2) we get this time
$\pi_{[\omega,c]}(x)=\pi_{[\omega,c]}\circ \pi_{[z,c]}(x)=\pi_{[\omega,c]}(\omega)=\omega$,
which is in $\mathfrak{h}\cap [\omega,c]$ by our assumption that $\omega\in \mathfrak{h}$.
We conclude that $x\in V$, therefore $\mathfrak{h}\subsetneq V$,
contradicting again the maximality of $\mathfrak{h}$.
\end{proof}

\begin{proposition} \label{prop:seperation2}
Let $C,C' \subset M$ be two non-empty disjoint closed convex sets. Then there exists an admissible half-space $\mathfrak{h}\in \Delta (C,C')$.
\end{proposition}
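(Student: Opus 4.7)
The plan is to reduce separating $C$ from $C'$ to separating two well-chosen points, and then to apply Proposition~\ref{prop:seperation1} to a pair in which the closed member already has non-empty interior, so that admissibility is automatic. Since $C$ and $C'$ are closed convex in a locally open-convex compact median algebra, both are gate-convex, so Lemma~\ref{lem:pointrep} applies. Two successive applications---first producing $c\in C$ with $\Delta(C,C')=\Delta(c,C')$, then, after dualizing half-spaces and applying the lemma to $C'$ against the singleton $\{c\}$, producing $c'\in C'$ with $\Delta(c,C')=\Delta(c,c')$---yield $\Delta(C,C')=\Delta(c,c')$. Thus any half-space separating $c$ from $c'$ automatically separates $C$ from $C'$.

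Next, I would use that $M$ is compact Hausdorff, hence normal, together with local open-convexity, to produce open convex neighborhoods $A\ni c$ and $B\ni c'$ with disjoint closures $\bar A\cap \bar B=\varnothing$. Concretely, normality supplies disjoint open neighborhoods of $c$ and $c'$ whose closures remain disjoint, and within each such neighborhood local open-convexity supplies the desired $A$ or $B$. Since the closure of a convex set is convex (as noted at the start of \S\ref{sec:compact}), $\bar B$ is a closed convex set disjoint from the open convex set $A$.

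Proposition~\ref{prop:seperation1} applied to the pair $(A,\bar B)$ then yields an open half-space $\mathfrak{h}\in \Delta(A,\bar B)$. Since $c\in A\subset \mathfrak{h}$ and $c'\in B\subset \bar B\subset \mathfrak{h}^*$, we have $\mathfrak{h}\in \Delta(c,c')=\Delta(C,C')$, so $C\subset \mathfrak{h}$ and $C'\subset \mathfrak{h}^*$. Openness of $\mathfrak{h}$ is part of the conclusion of Proposition~\ref{prop:seperation1}, and $\mathfrak{h}^*$ contains the non-empty open set $B$, hence has non-empty interior. Thus $\mathfrak{h}$ is the required admissible half-space.

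The main subtlety I anticipate is guaranteeing non-empty interior on the closed side. A direct application of Proposition~\ref{prop:seperation1} to the pair $(A,C')$ would force $C'\subset \mathfrak{h}^*$ but would give no lower bound on the interior of $\mathfrak{h}^*$, which can easily be empty (for instance if $C'$ is a single point). Invoking Lemma~\ref{lem:pointrep} is precisely what allows replacing $C'$ by a closed convex set of the form ``closure of an open convex neighborhood'', whose interior is non-empty by construction, without sacrificing the separation of $C'$ from $C$.
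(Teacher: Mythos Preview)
Your proposal is correct and follows essentially the same route as the paper: reduce to separating two points via a double application of Lemma~\ref{lem:pointrep} (using that closed convex sets are gate-convex), choose open convex neighborhoods with disjoint closures by normality and local open-convexity, and apply Proposition~\ref{prop:seperation1} to the open neighborhood of one point against the closure of the other's, so that the complementary half-space automatically contains a non-empty open set. Your write-up is slightly more explicit than the paper's about the dualization in the second use of Lemma~\ref{lem:pointrep} and about why admissibility follows, but the argument is the same.
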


\begin{proof}
Using Lemma~\ref{lem:pointrep} twice, we find $c\in C$ and $c\in C'$ such that $\Delta (c,c')=\Delta (C,C')$.
Using the fact that $M$ is normal, we find open-convex neighborhoods $c\in U$ and $c'\in V$ having disjoint closures.
By Proposition~\ref{prop:seperation1}, there exists an open half-space $\mathfrak{h}\in \Delta(U,\bar{V})$.
As $V\subset \mathfrak{h}^*$, $\mathfrak{h}$ is admissible.
We have $\mathfrak{h}\in \Delta (U,\bar{V})\subset \Delta (c,c')=\Delta (C,C')$,
thus indeed $\mathfrak{h}\in \Delta (C,C')$
\end{proof}

\section{The support of a balanced measure} \label{sec:support}

In this section we study the support of balanced measures.
Recall that the support of a measure is the minimal closed set having a null complement
and that every Borel measure on a compact second countable topological space has a non-empty support.

\begin{definition} \label{def:sclocc}
A topological median algebra $M$ is said to be \textit{sclocc} if it is
second countable, locally open-convex and compact.
\end{definition}

In view of Example~\ref{exm:locc}, we get the following.

\begin{example} \label{ex:sclocc}
\begin{enumerate}
\item For a countable index set I, the cube $\{0,1\}^I$ is sclocc.
\item The Roller compactification of a CAT(0) cube complex with countably many vertices is sclocc. 
\item By \cite[Theorem 4.14(4)]{fio20}, the Roller compactification of a second countable, finite rank median space is sclocc.
\end{enumerate}
\end{example}

For the rest of the section we let $M$ be a {\em sclocc median algebra}. 

We let $\Prob(M)$ be the space of probability Borel measures on $M$ and recall the definition of the self-median operator 
\[ \Phi:\Prob(M) \to \Prob(M), \quad \Phi(\mu)=m_* (\mu^3), \]
which fixed points are the balanced measures on $M$.
The following observation is trivial, but useful.

\begin{lemma} \label{lem:morphisms}
For any Borel median algebra morphism $M\to N$,
the push forward map $\Prob(M)\to \Prob(N)$ commutes with the corresponding self-median operators.
In particular, the image of a balanced measure on $M$ is a balanced measure on $N$.
\end{lemma}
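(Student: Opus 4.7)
The plan is to unwind the definitions and verify the commutation $f_* \circ \Phi_M = \Phi_N \circ f_*$ by a short chain of equalities, after which the claim about balanced measures is immediate: if $\Phi_M(\mu) = \mu$ then $\Phi_N(f_*\mu) = f_*(\Phi_M(\mu)) = f_*\mu$.

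For the commutation, fix a Borel median morphism $f:M\to N$ and $\mu\in \Prob(M)$. First I would expand the left-hand side using functoriality of pushforward:
\[ f_*(\Phi_M(\mu)) = f_* \bigl((m_M)_*(\mu^3)\bigr) = (f\circ m_M)_*(\mu^3). \]
Then, since $f$ is a median morphism, $f\circ m_M = m_N \circ (f\times f\times f)$, so the above equals
\[ \bigl(m_N \circ (f\times f\times f)\bigr)_*(\mu^3) = (m_N)_* \bigl((f\times f\times f)_*(\mu^3)\bigr). \]
Finally I would invoke the standard fact that pushforward commutes with products of measures, namely $(f\times f\times f)_*(\mu^3) = (f_*\mu)^3$, yielding $(m_N)_*((f_*\mu)^3) = \Phi_N(f_*\mu)$, as desired.

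The only step that is not purely formal is the identity $(f\times f\times f)_*(\mu^3) = (f_*\mu)^3$, which needs a small measure-theoretic justification; however since the paper restricts throughout to second countable compact median algebras, the product Borel $\sigma$-algebra on $M^3$ agrees with the Borel $\sigma$-algebra of the product topology, and the Borel measurability of $f\times f\times f$ follows immediately from that of $f$. The identity of measures then holds on all measurable rectangles $A\times B\times C$ (both sides evaluate to $\mu(f^{-1}A)\mu(f^{-1}B)\mu(f^{-1}C)$) and hence on the full product $\sigma$-algebra by uniqueness of extension. This is the only place where one uses anything beyond the defining identity of a median morphism; no topological or convexity hypotheses on $M$ or $N$ are required beyond those needed to make $\Phi$ well-defined.
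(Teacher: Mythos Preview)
Your proof is correct and is exactly the natural argument; the paper itself calls the lemma ``trivial, but useful'' and gives no proof, so there is nothing further to compare. Your added remark about the product $\sigma$-algebra coinciding with the Borel $\sigma$-algebra in the second countable setting is a nice clarification of why $\Phi$ and the identity $(f\times f\times f)_*(\mu^3)=(f_*\mu)^3$ are well-defined, which the paper leaves implicit.
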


Another basic lemma is the following.

\begin{lemma} \label{lem:01}
The balanced measures on $\{0,1\}$ are exactly $\delta_0$, $\delta_1$ and $\half\delta_0+\half\delta_1$.
\end{lemma}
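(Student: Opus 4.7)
The plan is to parametrize $\mathrm{Prob}(\{0,1\})$ by a single number and solve an explicit polynomial equation. Every probability measure on $\{0,1\}$ has the form $\mu_p = (1-p)\delta_0 + p\delta_1$ with $p \in [0,1]$, so it suffices to determine which values of $p$ satisfy $\Phi(\mu_p) = \mu_p$.

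First I would note that on $\{0,1\}$ the median operator $m$ is the majority function, so $m^{-1}(\{1\})$ consists of the four triples with at least two coordinates equal to $1$. Computing the push-forward under $m$ gives
\[
\Phi(\mu_p)(\{1\}) = \mu_p^3\bigl(m^{-1}(\{1\})\bigr) = 3p^2(1-p) + p^3 = 3p^2 - 2p^3.
\]
Since $\Phi(\mu_p)$ is again of the form $\mu_q$ with $q = 3p^2 - 2p^3$, the balanced condition $\Phi(\mu_p) = \mu_p$ reduces to the single equation $p = 3p^2 - 2p^3$.

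Rewriting this as $p(2p-1)(p-1) = 0$, the only solutions in $[0,1]$ are $p \in \{0, \tfrac{1}{2}, 1\}$, which correspond exactly to $\delta_0$, $\tfrac{1}{2}\delta_0 + \tfrac{1}{2}\delta_1$ and $\delta_1$. Conversely, a direct check shows each of these three measures is indeed fixed by $\Phi$. There is no genuine obstacle here: the argument is a one-line combinatorial calculation of the push-forward followed by factoring a cubic, and the only thing to be mildly careful about is identifying $m^{-1}(\{1\})$ correctly with the four majority triples.
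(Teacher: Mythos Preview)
Your proof is correct and essentially identical to the paper's: both parametrize $\Prob(\{0,1\})$ by a single parameter, compute the push-forward $m_*(\mu^3)$ via the majority rule, and solve the resulting cubic. The only cosmetic difference is that the paper sets $x=\mu(\{0\})$ and obtains $x=x^3+3x^2(1-x)$, while you set $p=\mu(\{1\})$ and obtain $p=3p^2-2p^3$; these are the same equation under $p\leftrightarrow 1-x$.
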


\begin{proof}
Let $\mu$ be a balanced measure on $\{0,1\}$ and denote $x=\mu(\{0\})$. 
An easy calculation gives the equation $x=\mu(\{0\})=m_*(\mu^3)(\{0\})=x^3+3x^2(1-x)$
which solutions are exactly $0,1$ and $\half$.
\end{proof}

Since every Borel half-space gives a Borel median algebra morphism to $\{0,1\}$ (namely, the characteristic map of the half-space), we get the following.

\begin{corollary} \label{cor:half}
For every balanced measure on a median algebra,
the measure of any Borel half-space is either $0,1$ or $\half$.
\end{corollary}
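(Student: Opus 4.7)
The plan is to exhibit, for each Borel half-space $\mathfrak{h}\subset M$, a Borel median morphism $M\to\{0,1\}$ whose pushforward transports any balanced measure $\mu$ on $M$ to a balanced measure on $\{0,1\}$, and then to read off the desired trichotomy from Lemma~\ref{lem:01} via Lemma~\ref{lem:morphisms}.

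Concretely, I would take the characteristic function $\chi_\mathfrak{h}\colon M\to\{0,1\}$. It is Borel since $\mathfrak{h}$ is a Borel set, so the only thing to verify is that it is a median morphism, i.e.\ that $\chi_\mathfrak{h}(m(x,y,z))$ equals the majority of $\chi_\mathfrak{h}(x),\chi_\mathfrak{h}(y),\chi_\mathfrak{h}(z)$ for every $x,y,z\in M$. For this I would use the standard fact that $m(x,y,z)\in[x,y]$ for all $x,y,z$: whenever at least two of the three points lie in the convex set $\mathfrak{h}$, say $x,y\in\mathfrak{h}$, convexity gives $m(x,y,z)\in[x,y]\subset\mathfrak{h}$; the analogous statement with $\mathfrak{h}^*$ replacing $\mathfrak{h}$ (which is also convex, by definition of a half-space) handles the remaining cases. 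This is precisely the majority rule on $\{0,1\}$, so $\chi_\mathfrak{h}$ is a Borel median morphism.

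With this in hand, Lemma~\ref{lem:morphisms} tells us that $(\chi_\mathfrak{h})_*\mu\in\Prob(\{0,1\})$ is balanced, and Lemma~\ref{lem:01} then forces it to be one of $\delta_0$, $\delta_1$, $\tfrac{1}{2}\delta_0+\tfrac{1}{2}\delta_1$. Since $\mu(\mathfrak{h})=(\chi_\mathfrak{h})_*\mu(\{1\})$, this gives $\mu(\mathfrak{h})\in\{0,\tfrac{1}{2},1\}$, as required. There is no real obstacle here; the one point that warrants explicit checking is the morphism property of $\chi_\mathfrak{h}$, which is a direct consequence of convexity of both $\mathfrak{h}$ and $\mathfrak{h}^*$.
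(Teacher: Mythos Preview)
Your proof is correct and follows exactly the approach the paper takes: the characteristic function of a Borel half-space is a Borel median morphism to $\{0,1\}$, so by Lemma~\ref{lem:morphisms} the pushforward of $\mu$ is balanced and Lemma~\ref{lem:01} gives the conclusion. In fact you supply more detail than the paper, which merely asserts the morphism property of $\chi_{\mathfrak{h}}$ without spelling out the convexity argument.
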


For fully supported balanced measures and admissible half-spaces,
much more can be said.

\begin{lemma} \label{lem:balancedmeasures}
Let $\mu$ be a fully supported balanced measure on the sclocc median algebra $M$
and let $\mathfrak{h}$ be an admissible half-space. Then $\mathfrak{h}$ is clopen and $\mu(\mathfrak{h})=\half$.
If $\mathfrak{f}$ is a clopen half-space corresponding to another wall in $M$ then
$\mathfrak{f}$ and $\mathfrak{h}$ are transverse.
\end{lemma}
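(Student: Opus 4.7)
My plan is to establish the three assertions sequentially: first the measure value $\mu(\mathfrak{h})=\half$, then clopen-ness of $\mathfrak{h}$, and finally transversality with $\mathfrak{f}$.

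The measure value is immediate: admissibility makes both $\mathfrak{h}$ and $\mathrm{int}(\mathfrak{h}^*)$ non-empty open sets, so full support forces $0<\mu(\mathfrak{h})<1$, and Corollary~\ref{cor:half} leaves only $\mu(\mathfrak{h})=\half$.

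For clopen-ness, I would first identify $U:=\mathrm{int}(\mathfrak{h}^*)$ as itself a half-space. The sets $\overline{\mathfrak{h}}$ (closed convex, since closures of convex sets are convex) and $U$ (open convex) are disjoint, because a point in $\overline{\mathfrak{h}}$ has every neighborhood meeting $\mathfrak{h}$ and therefore cannot lie in the open set $U\subset\mathfrak{h}^*$. Proposition~\ref{prop:seperation1} then yields an open half-space $\mathfrak{g}$ with $U\subset\mathfrak{g}$ and $\overline{\mathfrak{h}}\subset\mathfrak{g}^*$, and the partition $M=\overline{\mathfrak{h}}\sqcup U$ forces these inclusions to be equalities, so $\mathfrak{g}=U$ is an admissible half-space. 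The first step then gives $\mu(U)=\half$, hence $\mu(\partial\mathfrak{h})=\mu(\mathfrak{h}^*)-\mu(U)=0$. The subtle remaining step is to upgrade $\mu(\partial\mathfrak{h})=0$ to $\partial\mathfrak{h}=\varnothing$, which I expect to be the main technical obstacle. I would argue by contradiction: given $p\in\partial\mathfrak{h}$ and a closed convex neighborhood $C\subset\mathfrak{h}$ of some $q\in\mathfrak{h}$ (using the basis of closed convex neighborhoods available in a sclocc algebra), Proposition~\ref{prop:seperation2} provides an admissible half-space $\mathfrak{k}$ with $C\subset\mathfrak{k}$ and $p\in\mathfrak{k}^*$. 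The balanced push forward of $\mu$ via $(\chi_{\mathfrak{h}},\chi_{\mathfrak{k}})$ to $\{0,1\}^2$, having both marginals equal to $\half$, must fall into one of three cases: the opposite-diagonal case $\mu(\mathfrak{h}\cap\mathfrak{k})=0$ is excluded because $C\subset \mathfrak{h}\cap\mathfrak{k}$ has positive measure by full support; the same-diagonal case forces $p\in\partial\mathfrak{k}$ (since $\mathrm{int}(\mathfrak{k}^*)$ is open and meets $\mathfrak{h}$ in a null set, hence empty); and the transverse case should, by iterating the construction with $\mathfrak{k}$ in place of $\mathfrak{h}$ and exploiting second countability, again trap $p$ in a null open set and contradict full support.

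For the transversality, $\mathfrak{h}$ is now clopen, making $\iota:=(\chi_{\mathfrak{h}},\chi_{\mathfrak{f}}):M\to\{0,1\}^2$ a continuous median morphism. By Lemma~\ref{lem:morphisms}, $\iota_*\mu$ is balanced on $\{0,1\}^2$ with both marginals equal to $\half$. A direct calculation of the fixed points of $\Phi$ on $\Prob(\{0,1\}^2)$ under these marginal constraints (solving a cubic whose roots turn out to be $0, \tfrac14, \half$) shows $\iota_*\mu$ is either the uniform measure on $\{0,1\}^2$ or is concentrated on one of the two diagonal pairs. In the uniform case all four open quadrants $\mathfrak{h}^{(*)}\cap\mathfrak{f}^{(*)}$ have mass $\tfrac14$, so are non-empty and transversality holds. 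In each diagonal case two of those open quadrants (both $\mathfrak{h}$ and $\mathfrak{f}$ being clopen) are $\mu$-null and hence empty by full support, giving $\mathfrak{h}=\mathfrak{f}$ or $\mathfrak{h}=\mathfrak{f}^*$ and contradicting the assumption that the walls are distinct.
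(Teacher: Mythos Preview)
Your argument for $\mu(\mathfrak{h})=\half$ is fine and matches the paper. Your transversality argument via the push forward to $\{0,1\}^2$ is correct, though the paper does it more directly: assuming non-transversality, say $\mathfrak{f}\cap\mathfrak{h}^*=\varnothing$, one gets $\mu(\mathfrak{f}^*\cap\mathfrak{h})=\mu(\mathfrak{h})-\mu(\mathfrak{f})=0$, and since $\mathfrak{f}^*\cap\mathfrak{h}$ is open this forces $\mathfrak{f}=\mathfrak{h}$.

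The clopen-ness step, however, has a genuine gap. You yourself flag that passing from $\mu(\partial\mathfrak{h})=0$ to $\partial\mathfrak{h}=\varnothing$ is ``the main technical obstacle,'' and your proposed case analysis does not close it: in the transverse (uniform) case you only say the construction ``should, by iterating \ldots\ and exploiting second countability, again trap $p$ in a null open set,'' which is not an argument. There is also an unjustified step earlier: to invoke Proposition~\ref{prop:seperation1} with $U=\mathrm{int}(\mathfrak{h}^*)$ you need $U$ to be convex, and the paper nowhere asserts that interiors of convex sets are convex.

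The paper avoids the boundary altogether with a one-line trick. Fix any $x\in\mathfrak{h}$ and apply Proposition~\ref{prop:seperation2} to the disjoint closed convex sets $\mathfrak{h}^*$ and $\{x\}$, obtaining an \emph{admissible} half-space $\mathfrak{h}'$ with $\mathfrak{h}^*\subset\mathfrak{h}'$ and $x\notin\mathfrak{h}'$. Then $\mu(\mathfrak{h}')=\half$ by the first step, and since $M=\mathfrak{h}\cup\mathfrak{h}'$ we get $\mu(\mathfrak{h}\cap\mathfrak{h}')=0$. But $\mathfrak{h}\cap\mathfrak{h}'$ is open, so full support forces $\mathfrak{h}\cap\mathfrak{h}'=\varnothing$, hence $\mathfrak{h}'=\mathfrak{h}^*$ is open. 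This replaces your entire boundary analysis with one application of Proposition~\ref{prop:seperation2} to the closed set $\mathfrak{h}^*$ itself, rather than trying to show its interior is already a half-space.
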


\begin{proof}
That $\mu(\mathfrak{h})=\half$ follows immediately from Corollary~\ref{cor:half}, by the assumptions that $\mu$ is fully supported and balanced, as by the admissibility of $\mathfrak{h}$,
both $\mathfrak{h}$ and $\mathfrak{h}^*$ have positive measures.
Fix $x\in \mathfrak{h}$ and use 
Proposition~\ref{prop:seperation2}
to find an admissible half-space $\mathfrak{h}' \in \Delta(\mathfrak{h}^*,x)$. 
Then also $\mu(\mathfrak{h}')=\half$ and we have $M=\mathfrak{h}\cup \mathfrak{h}'$.
It follows that $\mu(\mathfrak{h}\cap \mathfrak{h}')=0$. Since 
$\mathfrak{h}\cap \mathfrak{h}'$ is open and $\mu$ is fully supported, we conclude that $\mathfrak{h}\cap \mathfrak{h}'=\varnothing$,
thus $\mathfrak{h}'=\mathfrak{h}^*$ and indeed, $\mathfrak{h}$ is clopen

We now let $\mathfrak{f}$ be a clopen half-space which is not
transverse to $\mathfrak{h}$ and show that 
$\mathfrak{f}$ and $\mathfrak{h}$ determine the same wall in $M$.
Without loss of the generality we assume $\mathfrak{f}\cap\mathfrak{h}^*=\varnothing$.
We have $\mu(\mathfrak{f}\cap\mathfrak{h})=\mu(\mathfrak{f})=\half$ 
and as $\mu(\mathfrak{h})=\half$ we get $\mu(\mathfrak{f}^*\cap\mathfrak{h})=0$.
Since 
$\mathfrak{f}^*\cap \mathfrak{h}$ is open and $\mu$ is fully supported, we conclude that $\mathfrak{f}^*\cap\mathfrak{h}=\varnothing$,
thus $\mathfrak{f}=\mathfrak{h}$.
This finishes the proof .
\end{proof}

We are now ready to prove that the support of a balanced measure is a cube.

\begin{proof}[Proof of Proposition~\ref{prop:support}]
We let $\mu$ be a balanced measure on $M$.
We note that the support of $\mu$, $\supp(\mu)$, is a closed sub-algebra of $M$, and thus, a sclocc median algebra.
Indeed, fixing $x,y,z\in \supp(\mu)$, for every open neighborhood $O$ of  $m(x,y,z)$, there are open neighborhoods $x\in U$, $y\in V$ and $z\in W$ such that $(x,y,z)\in U\times V\times W \subset m^{-1}(O)$,
and we get $\mu(O)\geq \mu(U)\mu(V)\mu(W) >0$, therefore $m(x,y,z)\in \supp(\mu)$. By restricting to its support, we assume as we may that $\mu$ is a fully supported balanced measure on $M$ and argue to show that $M$ is a cube.

By Corollary~\ref{cor:cube}, we need to show that $\mathscr{W}^\circ$ is separating and 
every pair of distinct walls in $\mathscr{W}^\circ$ is transverse.
Proposition~\ref{prop:seperation2} guarantees that the collection of admissible half-spaces is separating
and, by the first part of Lemma~\ref{lem:balancedmeasures}, this collection coincides with $\mathscr{W}^\circ$.
Thus, we get that $\mathscr{W}^\circ$ is separating.
By the last part of Lemma~\ref{lem:balancedmeasures} we also get that every pair of distinct walls in $\mathscr{W}^\circ$ is transverse.
Thus, indeed, $M$ is a cube.
\end{proof}

\section{Balanced measures on cubes} \label{sec:uniform}

Fix a countable set $I$ and consider the cube $M=\{0,1\}^I$.
It is convenient to identify $\{0,1\}$ with the group $\mathbb{Z}/2\mathbb{Z}$ and $M$ with the compact group 
$(\mathbb{Z}/2\mathbb{Z})^I$.
We denote by $\lambda$ the Haar measure on $M$, $\lambda=(\half\delta_0+\half\delta_1)^I$.
It is fully supported and balanced.
To see that it is indeed balanced, recall that $\lambda$ is the unique probability Borel measure on $M$
that is invariant under translations, but by Lemma~\ref{lem:morphisms}, also $\Phi(\lambda)$ is invariant
under translations, as translations form median algebra automorphisms.

This section is devoted to the proof of Proposition~\ref{prop:uniform},
which claims that $\lambda$ is the unique fully supported balanced measure on $M$.
We observe that it is enough to prove this for finite cubes, that is, in case $|I|<\infty$,
as every cube is the inverse limit of its finite coordinate projections,
which are median algebras surjective morphisms,
thus take fully supported and balanced measure to fully supported and balanced measures
by Lemma~\ref{lem:morphisms}.

Dealing with measures on finite sets, we will identify a measure $\mu$ with the function $x\mapsto \mu(\{x\})$,
writing $\mu(x):=\mu(\{x\})$.
For our proof, it is beneficial to study a one-parameter class of measures on $M$, namely,
the measures that are invariant under translations by a certain index two subgroup $K_0<M$.

\begin{lemma} \label{lem:phidynamics}
Fix a natural integer $n$ and let $M=(\mathbb{Z}/2\mathbb{Z})^n$.
Consider the group homomorphism 
\[ \rho:M\to \mathbb{Z}/2\mathbb{Z}, \quad (x_1,\dots, x_n) \mapsto \sum x_i, \]
denote its kernel by $K_0$ and denote the non-trivial coset of $K_0$ by $K_1$.
Then the map 
\[ [0,1]\to \Prob(M)^{K_0}, \quad \omega\mapsto \mu_t=\frac{t}{2^{n-1}}\cdot\chi_{K_0}+\frac{1-t}{2^{n-1}}\cdot\chi_{K_1} \]
is bijective and the cubic polynomial 
\[ \phi(t)=t+(-1)^n2^{2-n}(t-\half)(t^2-t+\frac{1}{4}+(-1)^n\frac{3}{4}-(-1)^n2^{n-2}) \]
satisfies the relation, $\Phi(\mu_t)=\mu_{\phi(t)}$
for every $t\in [0,1]$.
\end{lemma}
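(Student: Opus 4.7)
The plan is in two parts, corresponding to the two claims of the lemma.

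For the bijection, note that a probability measure $\mu$ on $M$ is $K_0$-invariant iff it is constant on each orbit of the translation action of $K_0$, and since $K_0$ has index $2$ these orbits are precisely the two cosets $K_0, K_1$, each of cardinality $2^{n-1}$. Setting $t := \mu(K_0) \in [0,1]$ thus determines $\mu$ uniquely and reproduces the displayed formula for $\mu_t$, giving the bijection onto $\Prob(M)^{K_0}$.

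For the polynomial identity, the key opening observation is that $K_0 \le M$ acts on $M$ by translations, which are median algebra automorphisms, so by Lemma~\ref{lem:morphisms} $\Phi$ preserves $K_0$-invariance. Hence $\Phi(\mu_t) = \mu_s$ with $s := \Phi(\mu_t)(K_0)$, and the whole problem reduces to computing this one scalar as a function of $t$. I would do this by passing to the character $\epsilon := (-1)^\rho \colon M \to \{\pm 1\}$, which gives $\chi_{K_0} = (1+\epsilon)/2$. Setting $r := 2t-1$, the measure $\mu_t$ has density $(1 + r\epsilon)/2^n$ with respect to counting measure, so expanding the product $(1+r\epsilon(a))(1+r\epsilon(b))(1+r\epsilon(c))$ and integrating against $\epsilon \circ m$ yields
\[ s \;=\; \tfrac{1}{2} + \tfrac{1}{2}\sum_{S \subseteq \{a,b,c\}} r^{|S|}\, \mathbb{E}_{\lambda^3}\!\left[\epsilon(m(a,b,c))\prod_{x \in S}\epsilon(x)\right], \]
where $\lambda$ is the uniform measure on $M$. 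Under $\lambda^3$ the per-coordinate triples $(a_i,b_i,c_i)$ are i.i.d.\ uniform on $\{0,1\}^3$, so each expectation above factorizes as an $n$-fold product of a single-coordinate moment of the shape $\mathbb{E}[(-1)^{\mathrm{maj}(X,Y,Z) + \sum_{W \in S'} W}]$, which a direct $8$-case enumeration evaluates to $0$ for $|S| \in \{0,2\}$, to $1/2$ for $|S|=1$, and to $-1/2$ for $|S|=3$. Combining with the multiplicity $3$ for the singletons gives $s = \tfrac{1}{2} + (3r + (-1)^n r^3)/2^{n+1}$; substituting $r = 2t-1$ and using $(t-\tfrac12)^2 = t^2 - t + \tfrac14$ then matches this to the stated $\phi(t)$.

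The main obstacle is the bookkeeping of signs and cancellations in the moment computation: the factor $(-1)^n$ in $\phi$ appears precisely because the single-coordinate triple moment $\mathbb{E}[(-1)^{\mathrm{maj}(X,Y,Z)+X+Y+Z}] = -\tfrac{1}{2}$ is raised to the $n$-th power, while the vanishing of the mixed $|S|=2$ moments is what prevents any quadratic-in-$(t-\tfrac12)$ term from appearing. Once these few single-variable moments are verified, the rest of the argument is a routine polynomial expansion.
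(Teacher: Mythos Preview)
Your argument is correct, and it takes a genuinely different route from the paper's proof. The paper computes $\Phi(\mu_t)(\{0\})=\mu_t^3(m^{-1}(\{0\}))$ by partitioning $M^3$ according to the parity pattern $(\rho(a),\rho(b),\rho(c))$ into four pieces $A_0,\dots,A_3$, setting $a_i(n)=|m^{-1}(\{0\})\cap A_i|$, deriving a $4\times 4$ linear recurrence in $n$ for the vector $(a_i(n))$, solving it explicitly, and then substituting the resulting closed forms into $\sum_i a_i\,t^{3-i}(1-t)^i/8^{n-1}$. Your approach instead exploits the multiplicative character $\epsilon=(-1)^\rho$ to write $\mu_t$ as $(1+r\epsilon)/2^n$ times counting measure, after which the computation of $\Phi(\mu_t)(K_0)$ factorises over the $n$ coordinates into an $n$-th power of a single $8$-case moment; the whole recurrence is thereby bypassed. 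Your method is shorter and more conceptual: the appearance of $(-1)^n$ in $\phi$ is explained as $(-1/2)^n$ coming from the $|S|=3$ moment, and the absence of a quadratic-in-$(t-\tfrac12)$ term is explained by the vanishing of the $|S|\in\{0,2\}$ moments. The paper's approach, by contrast, yields the extra information of the individual counts $a_i(n)$, which are not needed for the lemma but make the combinatorics of $m^{-1}(\{0\})$ explicit.
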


We now provide the proof of Proposition~\ref{prop:uniform}, based on Lemma~\ref{lem:phidynamics}, 
which proof we postpone until later.

\begin{proof}[Proof of Proposition~\ref{prop:uniform}]
We argue to show that $\lambda$ is the unique fully supported balanced measure on $M=\{0,1\}^I$.
As mentioned above, we may assume that $I$ is finite. We do so
and prove the claim by an induction on $|I|$.
The base case $|I|=0$ is trivial. We note also that the case $|I|=1$ is proven in Lemma~\ref{lem:01}.
We now fix a natural $n>1$, assume $|I|=n$ and that the proposition is known for every cube $\{0,1\}^J$ with $|J|<n$.
We let $\mu$ be a balanced measure on $\{0,1\}^I$
and we argue to prove that for every $x\in \{0,1\}^I$, $\mu(x)=\lambda(x)=1/2^n$.

We denote by $\{e_i\mid i\in I\}$ the standard generating set of $M$
and for every $i\in I$ we consider the obvious projection $\pi_i:M\to \{0,1\}^{I\setminus \{i\}}$.
Fixing $i\in I$ and noticing that $\pi_i$ is a homomorphism of groups, we get by Lemma~\ref{lem:morphisms} that
the push forward of $\mu$ by $\pi_i$ is balanced, thus we conclude by our induction hypothesis
that for every $x\in M$, $\mu(x)+\mu(x+e_i)=1/2^{n-1}$.
It follows that for evey $i,j\in I$, $\mu(x)=\mu(x+e_i+e_j)$.
We denote by $K_0$ the group generated by the set $\{e_i+e_j\mid i,j\in I\}$
and conclude that $\mu$ is $K_0$ invariant.

Noticing that $K_0<M$ coincides with the subgroup considered in Lemma~\ref{lem:phidynamics},
it follows that $\mu=\mu_t$ for some $t\in [0,1]$.
Since $\mu$ is fully supported, we in fact  get that $t\in (0,1)$.
Since $\mu$ is balanced, we have by Lemma~\ref{lem:phidynamics},
$\mu_t=\Phi(\mu_t)=\mu_{\phi(t)}$,
thus $\phi(t)=t$.
We conclude that $t$ is a root of the polynomial 
\[ (-1)^n2^{n-2}(\phi(t)-t)=(t-\half)(t^2-t+\frac{1}{4}+(-1)^n\frac{3}{4}-(-1)^n2^{n-2}). \]
We observe that $t=1/2$ is the only root of this polynomial in the region $(0,1)$.
Indeed, for $c\in \mathbb{R}$, the polynomial $t^2-t+c$ has a root in $(0,1)$ iff $0<c \leq 1/4$,
which is not satisfied for $c=\frac{1}{4}+(-1)^n\frac{3}{4}-(-1)^n2^{n-2})$,
since for even $n$ we have $c\leq 0$ and for odd $n>1$ we have $c\geq 3/2$.
We conclude that $\mu=\mu_\half$,
thus indeed, for every $x\in \{0,1\}^I$, $\mu(x)=1/2^n$.
\end{proof}

\begin{proof}[Proof of Lemma~\ref{lem:phidynamics}]
The map $t\mapsto \mu_t$ is clearly injective and it is onto $\Prob(M)^{K_0}$
as every measure in $\Prob(M)^{K_0}$ (considered as a function on $M$) is constant on the fibers of $\rho$.
Since translation on $M$ are median algebra automorphisms, we get by Lemma~\ref{lem:morphisms} that
$\Phi$ preserves $\Prob(M)^{K_0}$, thus for every $t\in [0,1]$, $\Phi(\mu_t)=\mu_{\psi(t)}$
for some function $\psi:[0,1]\to [0,1]$.
We are left to prove that $\psi=\phi$. 

We denote by $0\in M$ the group identity and set $X=m^{-1}(\{0\})\subset M^3$.
Clearly, $0\in K_0$, thus for every $t\in [0,1]$, $\mu_t(0)=t/2^{n-1}$.
Applying this to $\psi(t)$, we get that for every $t\in [0,1]$, 
\[ \mu_t^3(X)=\mu_t^3(m^{-1}(\{0\}))=m_*(\mu_t^3)(\{0\})=\Phi(\mu_t)(\{0\})=\mu_{\psi(t)}(0)=\psi(t)/2^{n-1}. \]
It is then enough to show that for every $t\in [0,1]$ we have $\mu_t^3(X)=\phi(t)/2^{n-1}$,
which is what we now proceed to show.

We need to understand the subset $X\subset M^3$ and its measure under $\mu_t^3$.
We use the decomposition $M^3=A_0 \sqcup A_1 \sqcup A_2 \sqcup A_3$,
where 
\[ A_i=\cup \{K_{\epsilon_1}\times K_{\epsilon_2}\times K_{\epsilon_3} \mid \epsilon_1,\epsilon_2,\epsilon_3 \in\{0,1\},
~\epsilon_1+\epsilon_2+\epsilon_3=i\}, \]
that is, $A_i$ is the subset of $M^3$ consists of triples of elements out of which exactly $i$ are in $K_1$.
We observe that $\mu_t^3$, as a function on $M^3$, attains the constant value $t^{3-i}(1-t)^i/8^{n-1}$ on $A_i$.
Denoting $X_i=X\cap A_i$ and $a_i=|X_i|$ we get the formula 
\begin{equation} \label{eq:mu^3}
\mu_t^3(X)=\sum_{i=0}^3 a_i\cdot \frac{t^{3-i}(1-t)^i}{8^{n-1}}. 
\end{equation}

Our next goal will be to compute the coefficients $a_i$.
For this we now emphasize their dependence on $n$, denoting them $a_i(n)$.
Similarly, we write $X(n)$ and $X_i(n)$ for $X$ and $X_i$ correspondingly.
Writing further, $M(n)=\{0,1\}^n$, we make the identification $M(n)\simeq M(n-1)\times \{0,1\}$
and we identify accordingly also $M(n)^3\simeq M(n-1)^3 \times \{0,1\}^3$.
As $0\mapsto 0$ under the projection map $M(n)\to M(n-1)$,
we clearly have, using Lemma~\ref{lem:morphisms}, that the image of $X(n)$ under the corresponding map $M(n)^3\to M(n-1)^3$
is in $X(n-1)$.
We denote by $\pi:X(n) \to X(n-1)$ the corresponding restriction map
and consider the partition $X(n-1)=\sqcup_{j=0}^3 X_j(n-1)$.
We fix $i,j\in \{0,1,2,3\}$ and for each $x\in X_j(n-1)$ count the intersection size of the fiber $\pi^{-1}(\{x\})$
with the sets $X_i(n)$, that is, $|\pi^{-1}(\{x\})\cap X_i(n)|$.
One verifies easily that this size does not depends on $x$, only on $i,j\in \{0,1,2,3\}$
and, denoting it by $s_{i,j}$, we have 
\[ (s_{i,j})=\begin{pmatrix} 
1 & 1 & 0 & 0 \\
3 & 1 & 2 & 0 \\
0 & 2 & 1 & 3 \\
0 & 0 & 1 & 1
\end{pmatrix} . \]
We therefore obtain the recurrence linear relation
\[
\begin{pmatrix}
a_0(n) \\ a_1(n) \\ a_2(n) \\ a_3(n)
\end{pmatrix} 
=
\begin{pmatrix} 
1 & 1 & 0 & 0 \\
3 & 1 & 2 & 0 \\
0 & 2 & 1 & 3 \\
0 & 0 & 1 & 1
\end{pmatrix}
\begin{pmatrix}
a_0(n-1) \\ a_1(n-1) \\ a_2(n-1) \\ a_3(n-1)
\end{pmatrix},
\quad \quad
\begin{pmatrix}
a_0(1) \\ a_1(1) \\ a_2(1) \\ a_3(1)
\end{pmatrix} 
=
\begin{pmatrix}
1 \\ 3 \\ 0 \\ 0
\end{pmatrix},
\]
that leads to the explicit formulas
 \begin{align*}
     a_0(n) & = 2^n\left((\frac{3}{8}+(-1)^n \frac{1}{8})+2^n\frac{1}{8}\right), \\ 
     a_1(n) & = 2^n\left((\frac{3}{8}-(-1)^n \frac{3}{8})+2^n \frac{3}{8}\right), \\
     a_2(n) & = 2^n\left((-\frac{3}{8}+(-1)^n \frac{3}{8})+2^n \frac{3}{8}\right), \\
     a_3(n) & = 2^n\left((-\frac{3}{8}-(-1)^n\frac{1}{8})+2^n \frac{1}{8}\right).
 \end{align*}
By substituting these values in equation~\eqref{eq:mu^3}, we get
\begin{align*}
\mu_t^3(X)  & = \sum_{i=0}^3 a_i\cdot \frac{t^{3-i}(1-t)^i}{8^{n-1}} \\
 & = 2^n\left((\frac{3}{8}+(-1)^n \frac{1}{8})+2^n\frac{1}{8}\right)\cdot\frac{t^{3}}{8^{n-1}} \\
 & + 2^n\left((\frac{3}{8}-(-1)^n \frac{3}{8})+2^n \frac{3}{8}\right)\cdot\frac{t^{2}(1-t)}{8^{n-1}} \\
 & + 2^n\left((-\frac{3}{8}+(-1)^n \frac{3}{8})+2^n \frac{3}{8}\right)\cdot\frac{t(1-t)^2}{8^{n-1}} \\
 & + 2^n\left((-\frac{3}{8}-(-1)^n\frac{1}{8})+2^n \frac{1}{8}\right)\cdot\frac{(1-t)^3}{8^{n-1}} \\
 & = \frac{t+(-1)^n2^{2-n}(t-\half)(t^2-t+\frac{1}{4}+(-1)^n\frac{3}{4}-(-1)^n2^{n-2})}{2^{n-1}}= \frac{\phi(t)}{2^{n-1}},
\end{align*}
thus indeed, $\mu_t^3(X)=\phi(t)/2^{n-1}$, and this finishes the proof.
\end{proof}
    
\nocite{*}

\end{document}